\DeclareRobustCommand{\SkipTocEntry}[5]{}
\definecolor{blue}{rgb}{.255,.41,.884} 
\definecolor{red}{rgb}{1, 0, 0} 
\definecolor{green}{rgb}{.196,.804,.196} 
\definecolor{yellow}{rgb}{1,.648,0} 
\definecolor{pink}{rgb}{1,0.5,0.5}
\newtheorem{theorem}{Theorem}[section]
\newtheorem{lemma}[theorem]{Lemma}
\newtheorem{proposition}[theorem]{Proposition}
\theoremstyle{definition}
\theoremstyle{remark}
\newtheorem{remark}[theorem]{Remark}
\newcommand{\be}{\begin{equation}}
\newcommand{\ee}{\end{equation}}
\newcommand{\II}{{\rm  I\hspace{-.2mm}I}}
\newcommand{\IIo}{\hspace{0.4mm}\mathring{\rm{ I\hspace{-.2mm} I}}{\hspace{.0mm}}}
\newcommand{\IVo}{{\mathring{{\bf\rm I\hspace{-.2mm} V}}{\hspace{.2mm}}}{}}
\newcommand{\Vo}{{\mathring{{\bf\rm V}}}{}}
\newcommand{\VIo}{{\mathring{{\bf\rm V\hspace{-.2mm}I}}{\hspace{.2mm}}}{}}
\newcommand{\otop}{\mathring{\top}}
\newcommand{\si}{\sigma}
\newcommand{\ba}{\begin{array}}
\newcommand{\ea}{\end{array}}
\newcommand{\beq}{\begin{eqnarray}}
\newcommand{\eeq}{\end{eqnarray}}
\newtheorem{lm}{lemma}
\newtheorem{thee}{theorem}
\newtheorem{proo}{proposition}
\newtheorem{co}{corollary}
\newtheorem{rem}{remark}
\newtheorem{deff}{definition}
\newcommand{\bd}{\begin{deff}}
\newcommand{\ed}{\end{deff}}
\newcommand{\bl}{\begin{lm}}
\newcommand{\el}{\end{lm}}
\newcommand{\bp}{\begin{proo}}
\newcommand{\ep}{\end{proo}}
\newcommand{\bt}{\begin{thee}}
\newcommand{\et}{\end{thee}}
\newcommand{\bc}{\begin{co}}
\newcommand{\ec}{\end{co}}
\newcommand{\brm}{\begin{rem}}
\newcommand{\erm}{\end{rem}}
\def\Cal{\mathcal}
\newcommand{\bS}{\mathbb{S}}
\newcommand{\newc}{\newcommand}
\let\ccdot.
\newc{\aR}{\mbox{\boldmath{$ R$}}}
\newc{\aS}{\mbox{\boldmath{$ S$}}}
\newc{\aT}{\mbox{\boldmath{$ T$}}}
\newc{\aW}{\mbox{\boldmath{$ W$}}}
\newc{\aD}{\mbox{\boldmath{$ D$}}\hspace{-.2mm}}
\renewcommand{\colon}{\scalebox{1.2}{:}}
\newc{\aK}{\mbox{\boldmath{$ K$}}}
\newc{\aL}{\mbox{\boldmath{$ L$}}}
\newcommand{\ce}{{\Cal E}}
\newcommand{\ct}{{\Cal T}}
\newcommand{\bT}{{\Bbb T}}
\newcommand{\nn}[1]{(\ref{#1})}
\newcommand{\bg}{\mbox{\boldmath{$ g$}}}
\newc{\obstrn}[2]{B^{#1}_{#2}}
\newcommand{\rpl}                         
{\mbox{$
\begin{picture}(12.7,8)(-.5,-1)
\put(0,0.2){$+$}
\put(4.2,2.8){\oval(8,8)[r]}
\end{picture}$}}
\newcommand{\lpl}                         
{\mbox{$
\begin{picture}(12.7,8)(-.5,-1)
\put(2,0.2){$+$}
\put(6.2,2.8){\oval(8,8)[l]}
\end{picture}$}}
\newc{\tensor}[1]{#1}
\newc{\Mvariable}[1]{\mbox{#1}}
\newc{\down}[1]{{}_{#1}}
\newc{\up}[1]{{}^{#1}}
\newc{\JulyStrut}{\rule{0mm}{6mm}}
\newc{\midtenPan}{\mbox{\sf S}}
\newc{\midten}{\mbox{\sf T}}
\newc{\midtenEi}{\mbox{\sf U}}
\newc{\ATen}{\mbox{\sf E}}
\newc{\BTen}{\mbox{\sf F}}
\newc{\CTen}{\mbox{\sf G}}
\def\sideremark#1{\ifvmode\leavevmode\fi\vadjust{\vbox to0pt{\vss
 \hbox to 0pt{\hskip\hsize\hskip1em
 \vbox{\hsize2cm\tiny\raggedright\pretolerance10000
  \noindent #1\hfill}\hss}\vbox to8pt{\vfil}\vss}}}
\numberwithin{equation}{section}
\newcommand{\hh}{{\hspace{.3mm}}}
\newcommand{\mm}{{\hspace{-.2mm}}}
\renewcommand\colon{\scalebox{1.3}{$:$}}
\newcommand{\cc}{\boldsymbol{c}}
\newcommand{\pdot}{{\boldsymbol{\cdot}}}
\renewcommand{\=}{\stackrel\Sigma =}
\DeclareMathOperator{\tr}{tr}
\newcommand{\sss}{\scriptscriptstyle}
\renewcommand\geq{\geqslant}
\renewcommand\leq{\leqslant}
\DeclareMathOperator{\EXT}{d}
\newcommand{\ext}{{\EXT\hspace{.01mm}}}
\newcommand{\FFo}[1]{\mathring{\underline{\overline{\rm{#1}}}}}
\newcommand\reallywidehat[1]{%
\savestack{\tmpbox}{\stretchto{%
  \scaleto{%
    \scalerel*[\widthof{\ensuremath{#1}}]{\kern-.6pt\bigwedge\kern-.6pt}%
    {\rule[-\textheight/2]{1ex}{\textheight}}
  }{\textheight}%
}{0.5ex}}%
\stackon[1pt]{#1}{\tmpbox}%
}
\newcommand{\cg}{{\bm g}}
\begin{document}

\subjclass[2020]{
53C18, 53A55, 53C21, 58J32.
}

\renewcommand{\today}{}
\title{
{The Dirichlet-to-Neumann Map
for \\[1mm] Poincar\'e--Einstein Fillings
}}

\author{ Samuel Blitz${}^\diamondsuit$, A. Rod Gover${}^\heartsuit$, Jaros\l aw Kopi\'nski${}^{\clubsuit}$, \&  Andrew Waldron${}^\spadesuit$}

\address{${}^\diamondsuit$
 Department of Mathematics and Statistics \\
 Masaryk University\\
 Building 08, Kotl\'a\v{r}sk\'a 2 \\
 Brno, CZ 61137} 
   \email{blitz@math.muni.cz}
 
\address{${}^\heartsuit$
  Department of Mathematics\\
  The University of Auckland\\
  Private Bag 92019\\
  Auckland 1142\\
  New Zealand,  and\\
  Mathematical Sciences Institute, Australian National University, ACT 
  0200, Australia} \email{r.gover@auckland.ac.nz}
  
  \address{${}^{\clubsuit}$
  Center for Theoretical Physics, Polish Academy of Sciences, Warsaw, Poland
} \email{jkopinski@cft.edu.pl}

  \address{${}^{\spadesuit}$
  Center for Quantum Mathematics and Physics (QMAP)\\
  Department of Mathematics\\ 
  University of California\\
  Davis, CA95616, USA} \email{wally@math.ucdavis.edu}

\vspace{10pt}

\renewcommand{\arraystretch}{1}

\begin{abstract}
We study the non-linear Dirichlet-to-Neumann map for the 
Poin\-car\'e--Einstein filling problem. For even dimensional manifolds the range of this non-local map is described in terms of  a rank two ``Dirichlet-to-Neumann tensor'' along the boundary determined by the Poincar\'e--Einstein metric. 
This tensor is 
 proportional to  the variation of renormalized volume along a path of Poincar\'e--Einstein metrics. 
We construct natural
 ``Dirichlet-to-Neumann hypersurface invariants'' that are conformally invariant and recover  
all Dirichlet-to-Neumann tensors.
We give an explicit formula for these  hypersurface invariants   and
 use a new vanishing result for odd order $T$-curvatures to show that they  are  the unique, natural conformal hypersurface invariant of 
transverse order equaling the boundary dimension.
We also construct
such conformally invariant Dirichlet-to-Neumann hypersurface invariants 
for Poincar\'e--Einstein fillings for odd dimensional manifolds 
 with conformally flat boundary.


\vspace{.8cm}

\noindent
\begin{center}
{\sf \tiny Keywords: 
Conformal geometry,  Poincar\'e--Einstein manifolds, Dirichlet-to-Neumann map,
   renormalized volume.}
\end{center}

\vspace{-0.1cm}

\end{abstract}


\maketitle

\pagestyle{myheadings} \markboth{Blitz, Gover, Kopi\'nski, \& Waldron}{Poincar\'e--Einstein Dirichlet-to-Neumann Maps}



\newcommand{\balpha}{{\bm \alpha}}
\newcommand{\balphas}{{\scalebox{.76}{${\bm \alpha}$}}}
\newcommand{\bnu}{{\bm \nu}}
\newcommand{\blambda}{{\bm \lambda}}
\newcommand{\bnus}{{\scalebox{.76}{${\bm \nu}$}}}
\newcommand{\bnuss}{\hh\hh\!{\scalebox{.56}{${\bm \nu}$}}}

\newcommand{\bmu}{{\bm \mu}}
\newcommand{\bmus}{{\scalebox{.76}{${\bm \mu}$}}}
\newcommand{\bmuss}{\hh\hh\!{\scalebox{.56}{${\bm \mu}$}}}

\newcommand{\btau}{{\bm \tau}}
\newcommand{\btaus}{{\scalebox{.76}{${\bm \tau}$}}}
\newcommand{\btauss}{\hh\hh\!{\scalebox{.56}{${\bm \tau}$}}}

\newcommand{\bsigma}{{\bm \sigma}}
\newcommand{\bsigmas}{{{\scalebox{.8}{${\bm \sigma}$}}}}
\newcommand{\bbeta}{{\bm \beta}}
\newcommand{\bbetas}{{\scalebox{.65}{${\bm \beta}$}}}

\renewcommand{\bS}{{\bm {\mathcal S}}}
\newcommand{\bB}{{\bm {\mathcal B}}}
\renewcommand{\bT}{{\bm {\mathcal T}}}
\newcommand{\bM}{{\bm {\mathcal M}}}

\newcommand{\go}{{\mathring{g}}}
\newcommand{\nuo}{{\mathring{\nu}}}
\newcommand{\alphao}{{\mathring{\alpha}}}

\newcommand{\Ell}{\mathscr{L}}
\newcommand{\density}[1]{[g\, ;\, #1]}

\renewcommand{\Dot}{{\scalebox{2}{$\cdot$}}}

\newcommand{\PanE}{P_{4}^{\sss\Sigma\hookrightarrow M}}
\newcommand\eqSig{ \mathrel{\overset{\makebox[0pt]{\mbox{\normalfont\tiny\sffamily~$\Sigma$}}}{=}} }
\renewcommand\eqSig{\mathrel{\stackrel{\Sigma\hh}{=}} }
\newcommand\eqtau{\mathrel{\overset{\makebox[0pt]{\mbox{\normalfont\tiny\sffamily~$\tau$}}}{=}}}
\newcommand{\hd }{\hat{D}}
\newcommand{\hdb}{\hat{\bar{D}}}
\newcommand{\Two}{{{{\bf\rm I\hspace{-.2mm} I}}{\hspace{.2mm}}}{}}
\newcommand{\TwoN}{{\mathring{{\bf\rm I\hspace{-.2mm} I}}{\hspace{.2mm}}}{}}
\newcommand{\Fn}{\mathring{\mathcal{F}}}
\newcommand{\csdot}{\hspace{-0.75mm} \cdot \hspace{-0.75mm}}
\newcommand{\IdD}{(I \csdot \hd)}
\newcommand{\Kd}{\dot{K}}
\newcommand{\Kdd}{\ddot{K}}
\newcommand{\Kddd}{\dddot{K}}

 \newcommand{\bdot }{\mathop{\lower0.33ex\hbox{\LARGE$\cdot$}}}

\definecolor{ao}{rgb}{0.0,0.0,1.0}
\definecolor{forest}{rgb}{0.0,0.3,0.0}
\definecolor{red}{rgb}{0.8, 0.0, 0.0}

\newcommand{\APE}[1]{{\rm APE}_{#1}}
\newcommand{\PE}{{\rm PE}}
\newcommand{\FF}[1]{\mathring{\underline{\overline{\rm{\scalebox{.8}{$#1$}}}}}}
\newcommand{\ltots}[1]{{\rm ltots}_{#1}}

\newcommand{\FFdn}{\FF{\hh \hh d\hh\hh }^{\sss\rm DN}}

\newcommand{\FFdnf}{\FF{\, d\, }^{\sss\rm \hh DN^\flat}}

\section{Introduction}

A smooth Riemannian $d$-manifold $(M_+,g^o)$
is said to be conformally compact if
$M_+$ is the interior of a smooth manifold with boundary $M$, and  the {\it compactified metric}
$$
g_s=s^2 g^o
$$
extends smoothly to a metric on $M=\overline {M_+}$
for some (and so any)
 smooth defining function $s\in C^\infty  {M}$.
 By the latter we mean that $\ext s$ is nowhere vanishing along
 $\Sigma:=s^{-1}(0)=\partial M$. 
Conformally compact manifolds for which the trace-free Ricci tensor vanishes, {\it i.e.}
$$
Ric^{\hh g^o}_{(ab)\circ}=0\, ,
$$
are termed {\it Poincar\'e--Einstein}. 
Necessarily, Poincar\'e--Einstein manifolds have constant negative scalar curvature, which by convention, we choose to  be $-d(d-1)$.
 A classical example is the Poincar\'e ball 
$
M_+=\big\{\vec x\in {\mathbb R}^d|\, |\vec x|^{\hh 2}<1\big\}$
with
$$ g^o=\frac{4\hh |d\vec x|^{\hh 2}}{\, (1-|\vec x|^{\hh 2})^2}\, .
$$
In addition to myriad mathematical applications, Poincar\'e--Einstein  and conformally compact structures have played a central {\it r\^ole} in the geometry of physical models relating boundary field theories to bulk gravitational ones 
(see for example~\cite{FG,GJMS,GZscatt,CQY,
FGbook}
and~\cite{Maldacena,HS,GrWi}, respectively).

Since $s^2 g^o$  extends to the boundary for any defining function~$s$, a conformally compact structure canonically determines a conformal class of boundary metrics $\cc_\Sigma$ that are induced by a corresponding conformal class of metrics 
$\cc$ on $M$.
It turns out that  the formal asymptotics of a Poincar\'e--Einstein metric $g^o$ provide an effective tool for the study of the conformal manifold~$(\Sigma,\cc_\Sigma)$ (see~\cite{FG,FGbook}). We say that a conformally compact structure for which
$$
Ric^{g^o}_{(ab)\circ}=s^k Q_{(ab)\circ}\, ,
$$
for some smooth tensor $Q_{ab }$ on $M^d$, is {\it asymptotically Poincar\'e--Einstein of order $k$} and denote such data by $\APE{k}^d$.  
We often drop the label $d$ when context makes it clear.
 We also adopt the notation ${\mathcal O}(s^k)$ for the right hand side of the above display and other such quantities.
The case $k=d-2$ is distinguished in the sense that~$g_s$, modulo terms of order~${\mathcal O}(s^{d})$,
 is no longer determined 
solely by the boundary conformal manifold $(\Sigma,\cc_\Sigma)$.
This is exactly 
when
the asymptotics of a global solution first
probe the interior structure of $M_+$.

\smallskip

 A key global conformal invariant of an even dimensional Poincar\'e--Einstein structure $(M_+,g^o)$
is  its renormalized volume~\cite{HS,GrVol,RenVol}, which arises as follows. 
Firstly, a key result of Graham and Lee~\cite{GrahamLee} (valid for both even or odd dimension parity) is that, for each choice of boundary metric representative $\bar g$ for $\cc_\Sigma$, on a collar neighborhood of $\Sigma$ there exists a unique defining function $s_{\rm GL}$ such that 
this function returns the (minimal) geodesic distance $r$ to the boundary~$\Sigma$ as measured by the compactified metric $g_{r}=r^2 g^o$.
Now consider the one-parameter $\varepsilon\in {\mathbb R}_{> 0}$ family of Riemannian manifolds $M_\varepsilon$ determined by the metric $\bar g$ according to
$$
M_{\varepsilon}:=\{p\in M_+|r(p)>\varepsilon\}\, ,
$$
with Riemannian metric given by the restriction of the metric $g^o$. Then the {\it renormalized volume} is given, independently from the original choice of boundary metric $\bar g$ required to determine~$s_{\rm GL}$ and when $d$ is  even, by
$$
\operatorname{Vol}^{\rm ren}(M_+,g^o):=
\frac1{(d-1)!}\hh \frac{\ext^{d-1}}{\ext\varepsilon^{d-1}}\Big(\varepsilon^{d-1}
\int_{M_\varepsilon} \ext\! \operatorname{Vol}^{g^o}\Big)\Big|_{\varepsilon=0}\, .
$$
For four-manifolds, a  result of Anderson~\cite{Anderson} is that $$
\operatorname{Vol}^{\rm ren}= \frac{4\pi ^2}3 \chi - \frac1{24}\int_{M_+} |W^{g^o}|^2\,  \ext\! \operatorname{Vol}^{g^o}\, ,
$$
where $W$ denotes the Weyl tensor and $\chi$ is the Euler characteristic of $M_+$. Moreover, Anderson also established that the functional gradient of the renormalized volume, along a path of Poincar\'e--Einstein metrics, is given by
$$
\frac1{6} \otop\hh C^g_{\hat n (ab)}\, .
$$
Here $C$ denotes the Cotton tensor, $\hat n$ is the inward unit normal of $\Sigma \hookrightarrow (M,g)$, and $\otop$ denotes evaluation along $\Sigma$ as well as
projection of   $\odot^2 T^*M|_\Sigma$ to
directions tangential to $\Sigma$ and removal of the   trace part.
The resulting section space is isomorphic to that of
 $\odot^2_\circ T^*\Sigma$.  
Note that, since conformal transformations preserve angles (and metric tracelessness), this projection is independent of any choice of $g\in\cc$.
Throughout, we use a $\circ$  to denote 
trace-free parts of a tensor.

 As discussed above, picking a metric~$\bar g \in \cc_\Sigma$ and any $p$ in a suitable collar neighborhood of~$\Sigma$, we may write  the corresponding {\it Graham--Lee defining function} $
s_{\rm GL}(p)=r(p)
$
where  $r(p)$ is the minimal geodesic distance between~$p$ and $\Sigma$ 
as measured by the metric $g_r=r^2 g^o$. 
In coordinates~$(r,\vec x)$   for which~$g_r(\frac{\partial}{\partial r},\frac{\partial}{\partial \vec x})=0$, series in powers of~$r$ are termed {\it Fefferman--Graham expansions} following their fruitful employment for the description of conformal invariants in~\cite{FG}. In $d$ dimensions, terms of order $r^{d-1}$ in the expansion of the {\it Graham--Lee compactified metric}~$g_r$ are of particular interest.
In particular, in dimension $d=4$, the third  Lie derivative of the {compactified metric}~$g_r$
extracts the term of order $r^3$ and 
defines a tensor 
$$
 \operatorname{DN}_{ab}^{(4)}:=
\big({\mathcal L}^3_{\frac{\partial}{\partial r}} g_r\big)_{ab}\big|_\Sigma
\, .
$$
Remarkably, the above tensor satisfies
$$
 \operatorname{DN}_{ab}^{(4)}\propto \otop C^{g_r}_{\hat n (ab)}\, ,
 $$
 where $\propto$ denotes equality up to a non-zero constant.
This tensor is interesting,
 especially  for general relativity (see for example~\cite{GoKo,Herfray}), and because it is the first odd order term in the Fefferman--Graham  expansion of~$g_r$. 
As shown in~\cite{FG,GrVol},   the tensor $\operatorname{DN}^{(4)}$ changes covariantly  when computed with respect to an expansion in a new geodesic coordinate~$r^\prime$ corresponding to a different choice of $\bar g^\prime \in \cc_\Sigma$.
On the other hand, under a general conformal transformation of the metric
\begin{equation}\label{Ccov}
 \otop\hh C^{\Omega^2 g}_{\hat n^\prime (ab)}
=\bar \Omega^{-1} \otop\hh  C^{g}_{\hat n(ab)} \, ,
\end{equation}
where $\hat n^\prime_a =\bar  \Omega\hh  \hat n_a$, for $0<\Omega \in C^\infty M$ and $\bar \Omega := \Omega|_\Sigma$, so long as $g\in \cc $ where $\cc$ is a conformal class of metrics defined by a Poincar\'e--Einstein structure (see~\cite{BGW} for the generalization to arbitrary conformal structures).
It therefore constitutes an example of what we shall later term a conformal hypersurface invariant of the conformal embedding~$\Sigma\hookrightarrow (M,\cc)$, while
 the tensor~$\operatorname{DN}^{(4)}$
 defines a section of~$\odot_\circ^2 T^*\Sigma[-1]$. (This is the space of symmetric, rank 2, trace-free tensor-valued,
 weight~$-1$ conformal densities; the latter are explained in Section~\ref{conformalstuff} but here it suffices to view a weight~$w$ tensor-valued density $T$ on a conformal manifold $M$ as an equivalence class $T=[g,t]=[\Omega^2 g, \Omega^w t]$ of metric-tensor pairs with respect to conformal rescalings.)
 In the same vein, the tensor $\otop C^{g}_{\hat n(ab)}$
 defines a section of 
  $\otop\!\odot^2\! T^*M[-1]|_\Sigma$ that  encodes the information of $ \operatorname{DN}^{(4)}$.

 Since the section of~$\odot_\circ^2 T^*\Sigma[-1]$ defined by the tensor $ \operatorname{DN}^{(4)}$ is determined
 in terms of $g^o$, 
it is an invariant of the Poincar\'e--Einstein structure.
However it is not determined by the local data of the boundary conformal class $\cc_\Sigma$, again see~\cite{FG}. 
Knowledge of $\operatorname{DN}^{(4)}$  requires information about the global Poincar\'e--Einstein metric~$g^o$. 
Our point is that, in four dimensions,  the tensor $\otop C^{g}_{\hat n(ab)}$ 
captures  the range
 of the 
global, non-linear, Dirichlet-to-Neumann map for Poincar\'e--Einstein metrics introduced by Graham in~\cite{LittleRobin}.


A natural tensor, such as $\otop C^{g_r}_{\hat n(ab)}$, 
which depends on~$g_r$ through   three (and possibly fewer) derivatives  with respect to a coordinate~$r$ transverse to~$\Sigma$, 
is an example of what will
 be
 termed a 
natural hypersurface
tensor of  transverse order $3$ (see Section~\ref{D2N}). The natural hypersurface   tensor $\otop C^{g_r}_{\hat n(ab)}$ is special, in that it obeys the conformal covariance property of Equation~\eqref{Ccov}.
Broadly, our aim  is to construct conformally covariant, natural hypersurface tensors of 
transverse order $d-1$ in even dimensions~$d$, that capture the Dirichlet-to-Neumann data $\operatorname{DN}^{(d)}$.

%
%
%

In arbitrary even dimensions $d$, the coefficient $({\mathcal L}_{\frac{\partial}{\partial r}})^{d-1} g_r\big|_\Sigma$ in the Fefferman--Graham expansion of a Poincar\'e--Einstein metric also
changes covariantly  when computed with respect to a different choice of 
boundary metric representative~$\bar g\in \cc_\Sigma$,
but once again is not determined by the boundary data~$(\Sigma,\cc_\Sigma)$~\cite{GrVol}. It therefore defines 
an invariant of the 
Poincar\'e--Einstein structure which
 we term a  {\it (Poincar\'e--Einstein) Dirichlet-to-Neumann tensor}~$\operatorname{DN}^{(d)}$. When it exists (meaning when the data of $(\Sigma, \cc_\Sigma)\hookrightarrow M^d$ uniquely determines a corresponding Poincar\'e--Einstein structure, for example for boundary metrics suitably close to the round sphere~\cite{GrahamLee}), the map $$
  \cc_\Sigma
 \mapsto \operatorname{DN}^{(d)}\in \Gamma(\odot_\circ^2 T^*\Sigma[3-d])$$ is termed the {\it (Poincar\'e--Einstein) Dirichlet-to-Neumann map}; see~\cite{LittleRobin}. The linearization of this map is studied in~\cite{Wang}.
The data of the boundary conformal class of metrics $\cc_\Sigma$ and the tensor~$\operatorname{DN}^{(d)}$  determines the formal Fefferman--Graham asymptotics of $g_r$  to all orders~\cite{FG}.

\smallskip

 A first result here is the existence of a  unique natural tranverse order 5 tensor for 
  dimension~$6$ Poincar\'e--Einstein structures that captures the Dirichlet-to-Neumann tensor.
\begin{theorem}\label{blunt}
Let $(M_+^6,g^o)$ be a Poincar\'e--Einstein structure with conformal infinity $\Sigma$.
Then the unique natural transverse order 5 section of $\otop\!\!\odot^2\! \!T^*M[-3]|_\Sigma$ 
that is an invariant of the 
Poincar\'e--Einstein structure is given, up to a non-zero constant multiple, for a choice of $g$ in the conformal class $\cc$ determined by $g^o$, by
$$
\VIo_{ab}^{{\sss\rm DN}}:=
\otop \big((\nabla_{\hat n}+2H)B_{ab}\big)
-4 
\bar C_{c(ab)}
\bar \nabla^c H\, .
$$ 
Furthermore
$$
\VIo^{\sss\rm DN}\simeq \operatorname{DN}^{(6)} .
$$
 \end{theorem}
 \noindent
  The detailed notions of transverse order and  natural tensors are described in Section~\ref{D2N}.  
 In the above, $B_{ab}$ denotes the  Bach tensor, $H$ is the mean curvature of the embedding~$\Sigma\hookrightarrow (M,g)$, and bars are used to denote objects intrinsic to the  hypersurface $\Sigma$. 
 Also, since $\VIo^{\sss\rm DN}$ and $\operatorname{DN}^{(6)}$ live in differing section spaces, namely that of $\otop\odot^2T^*M[-3]|_\Sigma$ and $\otop\odot^2T^*\Sigma[-3]$ respectively, we have employed a notation $\simeq$. 
  For $A\in  \otop\odot^2\!T^*M[3-d]|_\Sigma$ and $B\in \otop\odot^2\! T^*\Sigma[3-d]$, we say 
 $$A\simeq B$$ if when given any $\bar g \in \cc_\Sigma$ and its corresponding Graham--Lee compactified metric $g_r$, then their respective evaluations obey
 $$
 A^{g_{r}} \propto B^{\bar g}\, .
 $$
Note that,  as bundles,  $\otop\odot^2T^*M[-3]|_\Sigma$ and $\otop\odot^2T^*\Sigma[-3]$  are in fact isomorphic. However, on the one hand, the tensor $\operatorname{DN}^{(6)}$ is defined invariantly with respect to choices of $\bar g \in \cc_\Sigma$, while~$\VIo^{\sss\rm DN}$ is invariant with respect to choices of~$g\in  \cc$. Therefore we will use both notations, rather than invoking this isomorphism. Also strictly, since $\otop$ includes restriction to $\Sigma$ in its definition, we could drop the moniker $|_\Sigma$, but shall not do so for reasons of emphasis. 
Theorem~\ref{blunt} 
can be  proved by exhaustion;  key details are provided in Appendix~\ref{wedetestappendices}.

\smallskip

A main point of this article is
to both give a simple characterization of these maps and, in particular,  compute conformal hypersurface invariants, denoted~$\FF{\hh\hh d\hh\hh }^{\sss\rm DN}$ and termed {\it Dirichlet-to-Neumann hypersurface invariants}, that each capture the  Dirichlet-to-Neumann tensor $\operatorname{DN}^{(d)}$
for dimensions $d\geq 4$.
 Critically, Dirichlet-to-Neumann tensors are natural 
 and of  transverse order $d-1$; see  Proposition~\ref{Redford}. 
{\it I.e.}, we shall establish higher dimensional  analogs of Theorem~\ref{blunt}. 

These results
dovetail nicely
 with a seminal physics conjecture of Deser and Schwimmer~\cite{DS} that was proved by Alexakis~\cite{AlexakisI,AlexakisII},
as well as work on renormalized volumes by Anderson~\cite{Anderson} and Chang, Qing and Yang~\cite{CQY}. The latter work builds on  the former ones  to show that general even dimensional renormalized volumes are a sum of
an  Euler characteristic term plus an integral over~$M_+$ whose integrand  is a natural local conformal invariant.
In principle, following Anderson, even dimensional Dirichlet-to-Neumann tensors
could be computed by varying renormalized volumes~\cite{Anderson}.
The latter are intimately related to~$Q$-curvature invariants, whose
  complexity in terms of Riemannian invariants explodes in 
eight  and higher dimensions~\cite{GOpet}, so a
more powerful method is required.
 Luckily,  tractor calculus methods, and in particular recent results on conformally invariant normal operators~\cite{GPt} and conformal fundamental forms~\cite{BGW}, yield a rather  simple solution to this problem. This is a main theorem:



\begin{theorem} \label{peachy} 
Let $(M_+^{d},g^o)$ be an even-dimensional Poincar\'e--Einstein structure with conformal infinity $\Sigma$ and $d\geq 6$.
For $d=6$, we have that 
$$\VIo^{\sss\rm DN} = \bar{q}^* \otop \delta_{R}\, W\, .$$
Also, for $d\geq 8$,  set
$$
\FF{\hh \hh d\hh\hh }^{\sss\rm DN}
:=
\bar q^* \otop
 \delta_{\frac{d-6}{2},\frac{d-4}{2}}\, W
\, .
$$ 
Then, for $d\geq 6$, 
$$
\FF{\hh \hh d\hh\hh }^{\sss\rm DN}
\simeq
\operatorname{DN}^{(d)}\, .
$$
Moreover,  $\FF{\hh \hh d\hh\hh }^{\sss\rm DN}$ 
 has 
leading transverse order term 
$$
\otop\hh  \colon \nabla_{\hat n}^{d-5}
\colon\hh
B_{ab}\, ,
$$ 
and  is the 
unique (up to a non-zero constant multiple) natural conformal hypersurface invariant of  transverse order~$(d-1)$ in $\Gamma(\otop \! \odot^2 \! T^* M[3-d]|_\Sigma)$ 
determined by the 
Poincar\'e--Einstein structure.
Thus it is the functional gradient of the renormalized volume
along a path of Poincar\'e--Einstein metrics.
\end{theorem}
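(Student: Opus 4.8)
The plan is to turn the three structural features of $\operatorname{DN}^{(d)}$ recorded in Proposition~\ref{Redford}---that it is natural, conformally invariant, and of transverse order exactly $d-1$, valued in $\odot_\circ^2 T^*\Sigma[3-d]$---into a rigidity statement. The claim is that these features, together with the defining condition $Ric^{g^o}_{(ab)\circ}=0$, determine the tensor up to scale. Accordingly I would prove the theorem in two halves: a construction half that exhibits an explicit invariant of exactly the stated bundle type and transverse order and computes its leading transverse symbol, and a uniqueness half showing the relevant space of invariants is one-dimensional. All remaining assertions---the closed formulae, the leading term, and the identification with the renormalized-volume gradient---then follow formally.

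For the construction I would assemble the candidate $\bar q^*\hh\otop\hh\delta_{\frac{d-6}{2},\frac{d-4}{2}}W$ (and $\bar q^*\hh\otop\hh\delta_R W$ when $d=6$) from the tractor toolkit. Here $\delta_{\ldots}$ denotes the conformally invariant normal, Robin-type operators of~\cite{GPt}: applied to the Weyl curvature and iterated in the direction transverse to $\Sigma$, they carry $W$ into the weight-$(3-d)$ symmetric rank-two bundle, whereupon the tangential trace-free projection $\otop$ and the boundary restriction $\bar q^*$ yield a conformal hypersurface invariant of $\Sigma\hookrightarrow(M,\cc)$. Conformal invariance is inherited from that of the normal operators and of $\otop$, with the conformal fundamental forms of~\cite{BGW} governing the boundary behavior of the iterated normal derivatives. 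The decisive computation is the leading transverse symbol: the top-transverse-order part of the candidate is $\nabla_{\hat n}^{d-5}$ acting on the double divergence of $W$, and the Poincar\'e--Einstein equations identify this double divergence with the Bach tensor, so the symbol equals $\otop\hh\colon\nabla_{\hat n}^{d-5}\colon B_{ab}$. Since the Bach tensor has transverse order $4$, this shows the candidate has transverse order exactly $(d-5)+4=d-1$ and is in particular nonzero.

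The main obstacle is uniqueness. I would work in the geodesic normal gauge $g_r=r^2 g^o$ and exploit the $\APE{d-2}$ rigidity recalled in the introduction: modulo $\mathcal O(s^{d})$, the expansion of $g_r$ is fixed by $(\Sigma,\cc_\Sigma)$, and $(\mathcal L_{\frac{\partial}{\partial r}})^{d-1}g_r\big|_\Sigma$ is the first coefficient that genuinely probes the interior. A natural conformally invariant section of $\odot_\circ^2 T^*\Sigma[3-d]$ of transverse order $d-1$ is a universal expression in the curvature jets along $\Sigma$; on a Poincar\'e--Einstein background the vanishing of $Ric^{g^o}_{(ab)\circ}$ collapses these jets to the Weyl curvature, its covariant derivatives, and the mean curvature $H$, and the weight together with the trace-free symmetric rank-two constraint leaves at most a one-dimensional space of admissible leading symbols, necessarily a multiple of the single free Fefferman--Graham coefficient. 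Two invariants sharing the same nonzero leading symbol then differ by a natural invariant of strictly lower transverse order, which by the rigidity is determined by $(\Sigma,\cc_\Sigma)$ alone and hence cannot contribute a bulk-probing tensor; so the difference vanishes and uniqueness follows. The careful accounting of transverse order against conformal weight, carried out within the filtration of Section~\ref{D2N}, is the technical heart of the argument.

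Granting uniqueness, the remaining statements are immediate. The candidate and $\operatorname{DN}^{(d)}$ are both natural conformally invariant transverse-order-$(d-1)$ sections of $\odot_\circ^2 T^*\Sigma[3-d]$ determined by the Poincar\'e--Einstein structure, hence proportional; this establishes the displayed formulae for $d=6$ and $d\geq 8$, and comparing leading symbols fixes the normalization and gives the leading transverse term $\otop\hh\colon\nabla_{\hat n}^{d-5}\colon B_{ab}$. Finally, by the variational results of Anderson~\cite{Anderson} and Chang--Qing--Yang~\cite{CQY}, extended to all even $d$, the functional gradient of the renormalized volume along a path of Poincar\'e--Einstein metrics is likewise a natural conformally invariant section of the same bundle and transverse order determined by the structure; uniqueness forces it to be proportional to $\operatorname{DN}^{(d)}$, which is the final assertion.
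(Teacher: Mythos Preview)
Your overall architecture---construct a candidate via normal operators applied to the $W$-tractor, compute its leading transverse symbol, then prove uniqueness---matches the paper's. The construction half is essentially right, though the mechanism by which the Bach tensor appears is that it already occupies the projecting (bottom) slot of the $W$-tractor, rather than arising from a double divergence taken after the normal operator acts; the paper then verifies directly, by linearizing the Weyl tensor around $g$ in the direction $s^\ell h$, that $B$ has transverse order exactly $4$, whence the candidate has transverse order $d-1$.

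The uniqueness half has a genuine gap. You argue that the difference of two candidate invariants has transverse order at most $d-2$, is therefore ``determined by $(\Sigma,\cc_\Sigma)$ alone,'' and conclude it vanishes because it ``cannot contribute a bulk-probing tensor.'' But being a local functional of the boundary conformal class does not force a tensor to vanish---intrinsic conformal invariants of $(\Sigma,\cc_\Sigma)$ exist and are generically nonzero---so this step is a non sequitur as stated. The paper's actual mechanism is a \emph{parity obstruction}, isolated as Theorem~\ref{zero-invt}: every natural Riemannian invariant built polynomially from $\bar g$, $\bar g^{-1}$, $\bar R$, and $\bar\nabla$ has \emph{even} weight under constant rescalings, whereas the target weight $3-d$ is \emph{odd} when $d$ is even. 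To exploit this, the paper first establishes that any natural hypersurface invariant of transverse order $\leq d-2$ on an $\APE{d-3}$ structure decomposes into conformal fundamental forms (which vanish by the APE condition), $T$-curvatures (which can be set to zero by a choice of scale via Proposition~\ref{rehash}), and purely intrinsic terms; parity then kills the last piece, so the invariant vanishes in one scale and hence in all. Your sketch supplies neither the parity observation nor this decomposition, and without them uniqueness does not follow. (The paper also uses Theorem~\ref{zero-invt} a second time, inside the construction, to argue that the lower-order remainder in $\delta_{\frac{d-6}{2},\frac{d-4}{2}}W$ must sit in the same tractor slot as the leading term, so that $\bar q^*\otop$ really extracts a nonzero transverse-order-$(d-1)$ tensor.)
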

\noindent
Here we have employed a normal ordering notation
$\colon\nabla_{\hat n}^{\ell}\colon$ for $
\hat n^{a_1}\cdots \hat n^{a_\ell}
\nabla_{a_1}\cdots \nabla_{a_\ell}$
(the composition of operators $\nabla_{\hat n} \circ \nabla_{\hat n}$ is ill-defined) while 
the above tractor calculus notions and notations, including the $W$-tractor~$W$,  are explained in Section~\ref{conformalstuff}. 
The normal operator $ \delta_{\frac{d-6}{2},\frac{d-4}{2}}$ acting on the $W$-tractor is an example of those constructed in~\cite{GPt} and generalized to act on tractors in~\cite{B}, see Equation~\eqref{normalops}.

 It is interesting to note that in the context of minimal submanifolds of Poincar\'e--Einstein manifolds, there is a natural analog of the Dirchlet-to-Neumann map studied here~\cite{AM,MK}, that is again linked to variations of renormalized volume. We expect there to be a version of our construction applicable to that setting.

\medskip


The recently developed conformal fundamental forms of~\cite{BGW,B}, designed for the study of  conformal hypersurface  embeddings $\Sigma\hookrightarrow (M,\cc)$, play a key {\it r\^ole} in establishing our main theorems for the case~$d$ is even.
Given such an embedding, 
a $k^{\rm th}$ {\it conformal fundamental form} 
$\FF{k}$ is any section of~$\otop\!\odot^2 T^*M[3-k]|_\Sigma$ that is a natural conformal hypersurface invariant of transverse order~$k-1$ (in many contexts the integer $k$ is denoted by Roman numerals). In the cases $2\leq k\leq d-1$,~$k^{\rm th}$~conformal fundamental forms are  obstructions to a $d$-dimensional conformally compact manifold  being an $\APE{d-3}$ structure. 
This generalizes an older result~\cite{LeBrun,Goal}
 that the hypersurface embedding $\Sigma\hookrightarrow(M,s^2 g^o)$  (for any defining function $s$) of a Poincar\'e--Einstein structure $(M_+,g^o)$ is necessarily umbilic---so the trace-free second fundamental form $\IIo$ vanishes. In fact, vanishing of all 
conformal fundamental forms for 
$2\leq k\leq \ell$ with $\ell\leq d-1$
is in fact a both necessary and sufficient condition for a conformally compact structure to be 
$\APE{\ell-2}$.
Moreover, we have denoted Dirichlet-to-Neumann hypersurface invariants by $\FF{\hh\hh d\hh \hh}^{\sss\rm DN}$
because they may be viewed as critical order conformal fundamental forms, whose job is no longer to measure obstructions to the Poincar\'e--Einstein condition but rather to extract Dirichlet-to-Neumann data.

In dimension four a conformally invariant fourth fundamental form is~\cite{BGW,GoKo} 
\begin{equation}\label{mynumberisugly}
\IVo_{ab} :=
C_{\hat  n(ab)}^\top  
-  \bar \nabla^c W_{c(ab) \hat n}^\top+
H W_{\hat n ab \hat n} \, .
\end{equation}
For conformal embeddings arising from an $\APE{1}$ structure, the conformal hypersurface invariant~$\IVo$ restricts to
$\IVo^{\sss\rm DN}\simeq\operatorname{DN}^{(4)}$.
Importantly, unlike the tensor $\operatorname{DN}^{(4)}$, the fourth fundamental form is a natural conformal hypersurface invariant for {\it any} conformal hypersurface embedding, rather than only those embeddings arising as the boundary of a Poincar\'e--Einstein structure.
An interesting question is whether there are~$d^{\rm th}$ conformal fundamental forms for more general conformal embeddings. 

\medskip 

The case of odd dimensional Poincar\'e-Einstein structures is subtle. Firstly, no Anderson-type formula for the renormalized volume is available, and in general this quantity  depends on a choice of boundary metric representative $\bar g\in \cc_\Sigma$. Concordantly, there is an obstruction to smoothness of the Fefferman--Graham expansion termed the obstruction tensor~\cite{FG}. 
For general boundary conformal structures, the $r^{d-1}$ coefficient 
 in the Fefferman--Graham expansion of a Poincar\'e--Einstein metric does {\it not} define a tensor-density 
valued in~$\odot_\circ^2 T^*\Sigma[3-d]$
 independently of the choice of~$\bar g\in \cc_\Sigma$. 
 Indeed 
 the quantity $({\mathcal L}_{\frac{\partial}{\partial r}})^{d-1} g_r$ may not extend smoothly to the boundary $\Sigma$. 
  When the Fefferman--Graham obstruction tensor vanishes we may then at least define $\operatorname{DN}^{(d)}:=({\mathcal L}_{\frac{\partial}{\partial r}})^{d-1} g_r\big|_\Sigma$ for $d$ odd. However,  this  does not in general transform conformally covariantly as one changes the  choice of boundary metric~$\bar g$ required to determine the Graham--Lee compactified metric. But, when 
the boundary~$(\Sigma,\cc_\Sigma)$ is  conformally flat, we can instead define a canonical section~$\FF{\, d\, }^{\sss\rm \hh DN^\flat}$
of $\otop\!\!\odot^2 \!T^*M[3-d]|_\Sigma$
which is an invariant of the Poincar\'e--Einstein structure.
 \begin{theorem}\label{spliff}
Let $(M_+^5,g^o)$ be a Poincar\'e--Einstein structure with (locally) conformally flat conformal infinity $\Sigma$.
Then the unique (up to a non-zero constant multiple) natural transverse order 4 section of $\otop \odot_\circ^2 T^*M[-2]|_\Sigma$ that is an invariant of the Poincar\'e--Einstein structure, is given, for a choice of $g\in \cc$, by
$$
\Vo^{\sss\rm DN^\flat}:=
 B^\top_{(ab)\circ}\, .$$ 
Moreover, evaluated on  the Graham--Lee compactified metric $g_r$ corresponding to 
 a (locally) flat boundary metric,
$$
\Vo^{\sss\rm DN^\flat}\simeq \operatorname{DN}^{(5)}\, .
$$
\end{theorem}

 \smallskip
 
 \noindent
 The above is an odd dimensional analog of Theorem~\ref{blunt} and the following is that of Theorem~\ref{peachy}.
  
 \begin{theorem}\label{theoremono} 
Let $(M^d_+,g^o)$ be an odd-dimensional Poincar\'e--Einstein structure with a (locally) conformally flat conformal infinity and $d\geq 5$. Then if $d=5$, 
we have that 
$$
\Vo^{\sss\rm DN^\flat} =
 \otop\circ
 q^*\big(\big[\hat n^a \colon \nabla_{\hat n} \colon  F_{ab}{}^A{}_B\big]\big|_\Sigma\big)\, .
$$
For $d\geq 7$, set
$$
 \FF{\, d\, }^{\sss\rm \hh DN^\flat}:=
 \otop\circ
 q^*\big(\big[\hat n^a \colon \nabla_{\hat n}^{d-4} \colon  F_{ab}{}^A{}_B\big]\big|_\Sigma\big)\, .
$$
Then, evaluated on  the Graham--Lee compactified metric $g_r$ corresponding to 
 a 
 (locally) flat boundary metric,
$$
 \FF{\, d\, }^{\sss\rm \hh DN^\flat}\simeq
 \operatorname{DN}^{(d)}\, .
$$
Moreover,
$\operatorname{DN}^{(d)} := ({\mathcal L}_{\frac{\partial}{\partial r}})^{d-1} g_r\big|_\Sigma$ defines an element of $\Gamma(\odot_\circ^2 T^*\Sigma[3-d])$.

\end{theorem}

 The article is structured as follows.
 Section~\ref{conformalstuff}
 introduces the main technologies we require for handling 
 conformal geometries and
hypersurfaces embedded therein (tractor and hypersurface {\it cognoscenti} might safely leapfrog this section).
Our main results are proved in Section~\ref{D2N}.

\subsection{Conventions}
Throughout we take $M$ to be a smooth, for simplicity oriented, $d\geq 3$-dimensional manifold, and throughout unless otherwise specified, all structures are taken to be smooth. The canonical $d$-form determined by a metric $g$ is denoted $\ext {\rm Vol}^g$ and may be used when integrating over $M$; this is normalized  such that in local coordinates it gives the measure $\sqrt{\det g}\, \ext x^1\cdots \ext x^d$ where $g$ here denotes the matrix of metric components. The tangent, cotangent, and tensor bundles of~$M$ are denoted, respectively,  by $TM$, $T^*M$, and ${\mathbb T}M$. Sections  are often handled using an abstract index notation; for example $t^{ab}{}_c\in \Gamma(\otimes^2 TM\otimes T^*M)$, and the contraction $v(\omega)$ of a vector field $v$ and one-form  field $\omega$ is denoted by $v^a \omega_a=v_\omega=\omega_v\in C^\infty M$.  The inverse of a metric $g_{ab}\in \Gamma(\odot^2 T^*M)$  is denoted $g^{ab}$ and can be used to ``raise indices'' in the standard way, for example $\omega^a=g^{ab} \omega_b$. The symmetric trace-free part of a tensor $X_{ab}$ is denoted $X_{(ab)\circ}:=\frac12(X_{ab}+X_{ba}) - \frac1d g_{ab} X_c{}^c\in \Gamma(\odot_\circ^2 T^*M)$. 
Also, for a tensor $X_{abc\cdots}$, we denote $|X|_g^2:= X_{abc\cdots} X^{abc\cdots}$.
While we work solely in Riemannian signature, many results carry over {\it mutatis mutandis} to indefinite
metric signatures.

 The Levi-Civita connection of a metric is denoted $\nabla^g$ (or simply $\nabla$ when context lends clarity; we will similarly drop the superscript $g$ on other Riemannian tensors).
The Riemann tensor $R$ of $\nabla^g$ is defined by
$$
\big(R(x,y)z\big)^a:=\big((\nabla_x \nabla_y - \nabla_y  \nabla_x -\nabla_{[x,y]})z\big)^a = x^c y^d R_{cd}{}^a{}_b z^b = R_{xy}{}^a{}_z\in \Gamma(TM)\, ,
$$
where  $x,y,z\in \Gamma(TM)$ and $[x,y]$ is their Lie bracket. The Weyl, Cotton and Bach tensors are given by 
\begin{align}
W_{abcd}&:= R_{abcd}-g_{ac} P_{bd}
+g_{ad} P_{bc}
+g_{bc} P_{ad}
-g_{bd} P_{ac}\, ,\nonumber\\
C_{abc}\:&:=\nabla_a P_{bc}-\nabla_b P_{ac}\, ,\nonumber\\
B_{ab}\:\:&:=\Delta P_{ab}-\nabla^c \nabla_a P_{bc}
+P^{cd}W_{acbd}\, ,\label{Johanne}
\end{align}
where the Schouten tensor $P_{ab}$ and its trace $J=P_a{}^a$ are defined by the equation
$Ric_{ab}:=R_{ca}{}^c{}_b=(d-2)P_{ab}+g_{ab}J$, and $\Delta:=\nabla^a \nabla_a$ is the (negative) rough Laplacian.
We will often place a symbol above an equals sign to qualify its domain of applicability, for example $A\stackrel\Sigma =B$ implies equality along the hypersurface $\Sigma$.
We also rely on the canonical isomorphism between the projection of the bulk tangent bundle along a hypersurface and the hypersurface tangent bundle to employ the same abstract indices for hypersurface tensors as for their bulk counterparts.

\section{Conformal Hypersurface Calculus}\label{conformalstuff}

A {\it conformal manifold} $(M,\cc)$ is  a smooth manifold equipped with a  conformal class of metrics~$\cc$, meaning that if $g,g'\in \cc$ then  $g'=\Omega^2 g$ for some $0<\Omega\in C^\infty M$. A {\it conformal hypersurface embedding}  $\Sigma\hookrightarrow (M,\cc)$ is a conformal manifold equipped with a smoothly embedded codimension~1 submanifold~$\Sigma$. The data $(M,\cc)$ may be viewed as a  ray subbundle of $\odot^2T^*M$ and in turn as an  ${\mathbb R}_+$-principal bundle. The group action  $t\mapsto 
s^{-w}t$ on $t\in {\mathbb R}$ (corresponding to $g(p)\mapsto s^2 g(p)$) for $w\in {\mathbb R}$ and $s\in {\mathbb R}_+$ determines an associated line bundle~$\ce M[w]$ over~$M$ called a {\it weight $w$ conformal density bundle}. 
Given any vector bundle~${\mathcal V}M$ over~$M$, we denote ${\mathcal V}M[w]:={\mathcal V}M\otimes \ce M[w]$. 
It is also useful to define the operator $\underline w$ which acts by multiplication by $w$ on tensor-valued densities $\Phi$ of weight $w$, so that $\underline w\Phi = w \Phi$. 

The tautological section ${\bm g}\in\Gamma(\odot^2T^*M[2])$ determined by~$\cc$ is called the {\it conformal metric}. We may equally well label a conformal manifold $(M,\cc)$ by the pair $(M,\cg)$.
The {\it tractor bundle} is defined by (see~\cite{Curry})
$$
\ct M:=\Big(\bigsqcup_{g\in \cc} \ct^g M\Big)\Big \slash \sim\, ,
$$
where 
\begin{equation}\label{DC}
\ct_g M:=\ce M[1]\oplus T^*M[1]\oplus \ce M[-1]\, .
\end{equation}
The  equivalence relation $\sim$ on direct sum bundles is defined by the following relation on sections
$$
\Gamma(\ct_{g'})\ni (\tau,\mu + \Upsilon \tau, \rho - \Upsilon.\mu -\frac12 \Upsilon^2 \tau)\sim  
(\tau,\mu,\rho)\in \Gamma(\ct_{g})\, ,
$$
for $\Upsilon := \ext \log \Omega$. Here $\Upsilon.\mu:={\bm g}^{-1} (\Upsilon,\mu)$ and $\Upsilon^2:={\bm g}^{-1} (\Upsilon,\Upsilon)$. 
There is a canonical section $X$ of $\ct M[1]$ given by
$$
(0,0,1)\in \ct_g M[1]\, ,
$$
for any $g\in \cc$, which is termed the {\it canonical tractor}. Also, there is an indefinite, conformally invariant,  bundle metric $h\in \Gamma(\odot^2 \ct^*M)$ given, in an obvious matrix notation, by
$$
\begin{pmatrix}
0&0&1\\
0&{\bm g}^{-1}&0\\
1&0&0
\end{pmatrix}\, ,
$$
and termed the {\it tractor metric}.
Sections $T\in\Gamma(\ct M)$ can be denoted in an abstract index notation by $T^A$, so the tractor metric is then $h_{AB}$ and will be used to raise and lower tractor indices in the standard way. For example, $h_{AB} X^A X^B$ is the zero section of $\ce M[2]$.

On an oriented conformal hypersurface $\Sigma \hookrightarrow (M,\cc)$, there is a canonical section $N$ of~$\ct M\big|_\Sigma$ given, for any $g\in \cc$, by
$$
(0,\hat n,-H)\in \ct_g M\, ,
$$
where $\hat n$ is the inward unit conormal of $g$ and $H$ its mean curvature ({\it i.e.}, the average of the eigenvalues of the second fundamental form). This section  is called the {\it normal tractor}~\cite{BEG} and is key  for the study of conformally embedded hypersurfaces. 
The normal tractor and tractor metric allow us to define the tractor analog of the projection operator $\otop$ in the obvious way;  we recycle the same notation for this. 
There is a particularly useful relationship between $N$ and defining functions for $\Sigma$, but we first must introduce further tractor technology.

The tractor bundle~$\ct M$ comes equipped with a  canonical connection $\nabla^\ct$, termed the {\it tractor connection}, defined for any $g\in \cc$, and $ T\stackrel{\sss{\mathcal T}_g}{:=}(\tau,\mu,\rho)\in \Gamma(\ct_g M)$ by 
\begin{equation}\label{trac-conn}
\nabla^\ct  T\stackrel {\sss{\mathcal T}_g}=(\nabla \tau
-\mu
,\nabla \mu  
+ P^{g} \tau
+\bm g \rho  ,\nabla\rho
-P^g.\mu) \, .
\end{equation}
We will  adorn equal signs with  the symbol $\sss{\mathcal T}_g$
to indicate that we have used the metric $g$ to decompose sections of (tensor products of) the  tractor bundle into a direct sum of density valued sections according to Equation~\nn{DC}.

The curvature of $\nabla^\ct$ is denoted by $F$, or $F_{ab}{}^A{}_B$ in an abstract index notation.
In the above $P^g.\mu:=P({\bm g}^{-1}(\mu,\pdot),\pdot)$. 
Also, in the above, $\nabla$ is the Levi--Civita connection $\nabla^g$ of~$g$.
If follows from our discussion of conformal densities that, given $v\in\Gamma(TM)$ and a weight $w$ density $\varphi=[g;f]$, one has $\nabla^g_v \varphi = [g,(\ext f)(v)]$. 
Note that a conformal density $0<\tau\in \Gamma(\ce M[1])$ defines a metric $g=\tau^{-2}\cg\in \cc$ and conversely a metric $g\in \cc$ determines a strictly positive  density $\tau_g:=\big( \ext\! \operatorname{Vol}^\cg\! / \ext\! \operatorname{Vol}^{g}\!\big)^{\frac1{d}}\in\Gamma(\ce M[1]) $. 
We will term such a scale {\it true}.
Thus, given $g\in \cc$, we can define a canonical section 
$$
Z^A_a:=\tau_g\hh \nabla^{\ct}_a \big(\tau_g^{-1}X^A \big)\in\Gamma(T^*M\otimes \ct M[1])\, .
$$

Tractor tensor bundles are defined in the standard way and sections of such can also be denoted by an abstract index notation, for example $T^{AB}{}_C\in \Gamma({\otimes^2}\ct M\otimes \ct^* M)$; we 
will often label a generic tractor tensor bundle by $\ct^\Phi M$ (in general  $\Phi$ is some representation of $SO(d+1,1)$).
Then given a tractor $T \in \Gamma(\ct^\Phi M[w])$,  the {\it Thomas-D operator} 
$$D:\Gamma(\ct^\Phi M[w])\to \Gamma(\ct M \otimes \ct^\Phi M[w-1])\, ,$$ 
is defined~\cite{BEG} acting on $T$,
 for any~$g\in \cc$, by
\begin{equation*}
\resizebox{.9\hsize}{!}{$
D \, T = \left( w(d+2w-2) T, (d+2w-2) \nabla^\ct T, -\Delta^\ct T - w\bg^{ab} P_{ab}^g T \right) \in \Gamma(\ct _g M \otimes \ct^\Phi M[w-1])\,,
$}
\end{equation*}
where $\Delta^\ct := {\bm g}^{ab} \nabla^\ct_a \nabla^\ct_b$ is the
tractor Laplacian 
 and $\nabla^\ct$ denotes the tractor-Levi--Civita coupled connection.
When $w\neq 1-\frac d2$, we define $\hd :=(d+2w-2)^{-1} D$.

There is a useful  tractor analog of the Weyl tensor:
the 
{\it $W$-tractor}
 is a canonical section of~$
\otimes^4 \ct^*M[-2]
$,
with Weyl-tensor symmetries,
defined
in dimensions~$d\geq5$
for any choice of~$g\in \cc$,
by 
$$
W^{ABCD}\stackrel {\sss{\mathcal T}_g}=Z^A_a Z^B_b Z^C_c Z^D_d W^{abcd}+
4 Z^{[A}_{a\phantom b} Z^{B]}_b Z^{[C}_{c\phantom b} X^{D]}_{\phantom b} C^{abc}+
 \tfrac{4}{d-4}Z^{[A}_{a\phantom b} X^{B]}_{\phantom b} Z^{[C}_{c\phantom b} X^{D]}_{\phantom b} B^{ac}\, ,
$$
where $W_{abcd}$, $C_{abc}$ and $B_{ab}$
are, respectively,  the Weyl, Cotton and Bach tensors of the metric~$g$. Note that
$$
F_{ab}{}^{CD}=
Z_a^A Z_b^B
W_{AB}{}^{CD}
=Z^C_c Z^D_dW_{ab}{}^{cd}+
2 Z^{[C}_{c\phantom b} X^{D]}_{\phantom b} C_{ab}{}^c\, 
;
$$
see~\cite{GOadv,GOpet,BGW} for further details. Note that some authors define a $W$-tractor in dimensions $d\geq 5$ by multiplying  the above definition by a factor $(d-4)$
so that it is defined in all dimensions $ d\geq 3$.

\smallskip

Indeed there is a general notion termed the {\it projecting part} of a tractor, of which we only need special cases. In particular for a tractor $T \in \Gamma(\ct^{\otimes 4} M[w])$,
given in a choice of $g\in \cc$ by 
$$
T^{ABCD}\stackrel {\sss{\mathcal T}_g}= 4Z^{[A}_{a\phantom b} X^{B]}_{\phantom b} Z^{[C}_{c\phantom b} X^{D]}_{\phantom b} t^{ac}\, ,
$$
such that $t^{ab}\neq 0$, 
 we can extract
 the projecting part $q^*(T^{ABCD}):=[g;t_{ab}]\in \Gamma(\otimes^2 T^* M[w])$. 
Similarly for a tractor $T^{AB}\stackrel {\sss{\mathcal T}_g} = 2 Z^{[A}_b X^{B]} t^b$, we have that $q^*(T^{AB})=[g;t^b]$.

\smallskip

Given a conformal hypersurface embedding $\Sigma \hookrightarrow (M,\cc)$, we call $\sigma\in \Gamma(\ce M[1])$ a {\it defining density}
if $\sigma=s\tau$ for some $0<\tau\in \Gamma(\ce M[1])$ and 
$s$  a defining function for $\Sigma$. 
Then we may define a metric
$$
g^o=\sigma^{-2} {\bm g}
$$
on $M_+=M\setminus \Sigma$. Because $g^o$ is not defined  along $\Sigma=\partial M_+$, we refer to it as a {\it singular metric}. An important theorem of~\cite{Goal} is that $(M_+,g^o)$ is Poincar\'e--Einstein precisely when the {\it scale tractor} $I_\sigma:=\hat D \sigma$ is parallel with respect to the tractor connection $\nabla^\ct$
 and normalized,~{\it i.e.},
$$
\nabla^\ct I_\sigma = 0= I_\sigma^2-1 \, ,
$$ 
where $I_\sigma^2:=h(I_\sigma,I_\sigma)$.
This allows us to  label a  Poincar\'e--Einstein structure by the data~$(M,\bg,\sigma)$,
 where 
 the singular Einstein metric $g^o=\sigma^{-2}\bg$.
Note that, as was discussed in the introduction, the $k$th order asymptotics of this data can  be labeled by a representative triple $(M^d ,g_k,s)$ for which $\cc_k=[g_k]$ and~$\sigma=[g_k;s
]$, and such that $(M_+,\sigma^{-2}g_k)$ is $\APE{k}^d$.
We recall  that for $\APE{k\geq 0}^d$ structures, the normal tractor to $\Sigma$ is  given by~\cite{Goal} 
\begin{equation}\label{normalT}
N=I_\sigma|_\Sigma\, .
\end{equation}

The final piece of conformal hypersurface technology that we need is  
a generalization of the scalar
normal operators of~\cite{GPt}. For the simplest of these, recall that,  the {\it conformal-Robin operator}
$
\delta_R : \Gamma(\ce M[w])\to \Gamma(\ce \Sigma[w-1])
$
is defined along~$\Sigma$, for any $g\in \cc$, and $[g;f]\in  \Gamma(\ce M[w])$ by~\cite{Cherrier} 
$$
\delta_R f = \big(\nabla_{\hat n} - wH^g\big) f\hh \big|_\Sigma\, .
$$
This generalizes directly to tractors by replacing the Levi--Civita connection in  the above by its tractor-coupled analog. 

\smallskip

If $s$ is any defining function for $\Sigma$, we say that an operator ${\sf L}:\Gamma(\ct^\Phi M[w])  \to\Gamma(\ct^{\Phi'}M[w'])|_\Sigma$
has {\it transverse order} $k$ if
\begin{equation}\label{TO}
{\sf L} (s^{k+1} U)=0\neq {\sf L} (s^{k} V)\, ,
\end{equation}
for every  smooth tractor $U$ and some smooth tractor $V$ in its domain. Clearly $\delta_R$ has transverse order~$1$. The {\it normal operators} $\delta_k$ defined by
\begin{equation}\label{roughlyhalf}\Gamma(\ct^\Phi M[w]) \ni T \stackrel{\delta_k}\longmapsto 
\hh \colon (N^A D_A)^k\colon\hh\hh
T
:=N^{A_k} \cdots N^{A_1} D_{A_1} \cdots D_{A_k} T  \in \Gamma(\ct^\Phi M[w-k])|_{\Sigma}\, ,\end{equation}
have transverse order $k$ for generic weights $w\notin\{
\frac{2k-d}2,
\frac{2k-1-d}2,
\ldots,\frac{k+1-d}2\}$~\cite{GPt}. An improvement of these operators acting on densities was also produced in~\cite[Theorem 4.16]{GPt}, where the set of weights for which the transverse order is less than $k$ was shrunk. In~\cite[Theorem 3.4]{B}, these improved operators were generalized to act on arbitrary tractors. In particular, for even $d$ and integers $0<J$ and $0<k<d/2$, it was shown that they give maps
\begin{equation}\label{normalops}
\delta_{J,k} : \Gamma(\ct^\Phi M[w]) \rightarrow \Gamma(\ct^\Phi M[w-k-J])\big|_{\Sigma}\, ,
\end{equation}
with transverse order $J+k$ also at the special weights $w = k-d/2$. Importantly, these existence proofs are both constructive and algorithmic.

\section{
Dirichlet-to-Neumann Tensors}\label{D2N}


Any  Poincar\'e--Einstein structure $(M_+,g^o)$
determines a conformal embedding 
$\Sigma\hookrightarrow (\overline{ M_+},\cc)$ of its boundary,
where $\cc$ is the conformal class of  metrics on  $M:=\overline{ M_+}$ determined by $g^o$. 
Moreover, this embedding must be umbilic~\cite{LeBrun,Goal}, so the corresponding trace-free second fundamental form~$\IIo=0$. This tensor is an example  
of a natural conformal hypersurface invariant (see below).
In fact, a slew of other  natural conformal hypersurface invariants 
are also forced to vanish by the Einstein condition; this underlies the conformal fundamental forms construction of~\cite{BGW}. 

\smallskip

 We next need to define natural conformal hypersurface invariants.
 First recall that a natural Riemannian invariant on a Riemannian manifold is any tensor-valued polynomial made from the metric, its inverse, the Riemann tensor, and covariant derivatives of the latter. Strictly, this is the class of ``even invariants'', meaning those that are unaffected by a change of orientation. It is clear that this type of invariant is all that will be required in our context.
 This definition extends in the obvious way to a Riemannian manifold equipped with any set of  scalar functions. Extending these to  natural conformal hypersurface invariants is discussed in detail in~\cite[Section 2.4]{CAG}. 
 Here we are interested in the same notion 
 with some restrictions.

 Consider  triples~$(M,\cg,\sigma)$, where~$\sigma$ is a defining density for 
a conformally embedded hypersurface~$\Sigma\hookrightarrow (M,\cc)$.  
 For any  triple~$(M,\cg,\sigma)$, now consider a representative pair $(g,s)$ for $\sigma$. 
 Then we can construct the  natural Riemannian invariants~$i(g,s)$
built from the metric $g$ and the scalar function $\hat s:=s/|\ext  s|_g$ (in some collar neighborhood of $\Sigma$ where the denominator is non-vanishing), and then restrict these to $\Sigma$. In turn we consider the special class of such  objects~$\big[g;i(g,s)|_\Sigma\big]\in \Gamma({\mathbb T}M)|_\Sigma$
for which
$$
\bar \Omega^{-w} \hh i(\Omega^2 g, \Omega s)|_\Sigma
=i(g,s)|_\Sigma\, 
\quad	\mbox{ and } \quad 
i(g,e^\varpi s)|_\Sigma=i(g,s)|_\Sigma\, ,
$$
where $0<\Omega\in  C^\infty M\ni \varpi$, $\bar \Omega =\Omega|_\Sigma$  and $w\in {\mathbb R}$. The equivalence class  $\big[g,i(g,s)|_\Sigma\big]$
defines a  (weight $w$)  {\it  natural conformal hypersurface invariant}. Upon dropping the conformal requirement imposed by the first equality displayed above, we employ the terminology {\it natural Riemannian hypersurface invariant}. 

We use the same 
language to also refer to the larger class of
natural Riemannian hypersurface invariants $i(g,s)$ that may only
obey the requirements above 
 when restricting to the subset of  triples~$(M,\bg,\sigma)$ such 
that ~$(M_+,\sigma^{-2}\bg)$ is Poincar\'e--Einstein. 
  An  example  of such a natural conformal hypersurface invariant for Poincar\'e--Einstein structures
 is~$\otop C_{\hat n (ab)}|_\Sigma\in \Gamma(\otop\!\odot^2\!\hh T^*M[-1])|_\Sigma$   (see~\cite{GoKo,BGW}). 
  For brevity, we often refer to such tensors as ``natural'' and list the relevant section spaces and underlying structures.
   In some contexts we additionally  require that the boundary~$(\Sigma,\bar \cc)$ is 
 conformally Einstein or conformally flat.

We employ the terminology {\it preinvariant} for any representative $i(g,s)$ of a natural conformal hypersurface invariant $I$ (see~\cite{CAG}).
Examples of natural conformal hypersurface invariants include the unit conormal $\hat n$ and trace-free second fundamental form~$\IIo$. Sample preinvariants for these are, respectively, 
$$
\ext \hat s=\ext\big(s/|\ext  s|_g\big)\, ,\quad
\big(\operatorname{Id}
-\hh
\ext \hat s\otimes g^{-1}(\ext\hat s,\pdot)
\big)\circ \nabla \ext \hat s \, .
$$

Consider a conformal hypersurface invariant $I$  that is represented by $i(g,s)$ and obeys
$$i(g+s^kh,s)|_\Sigma\neq i(g,s)|_\Sigma= i(g+s^{k+1}h',s)|_\Sigma\, ,\quad k\in {\mathbb Z}_{\geq 0}\, , $$
for some $h$ and any $h'\in \Gamma(\odot^2T^*M)$ such that $ g+s^kh,g+s^{k+1}h'$ are metrics on $M$.  
 The number~$k$  does not depend on  the  choice of $(g,s)$ used to construct $I$. Thus (in line with definition of Equation~\nn{TO})
 we then 
say that $I$ has {\it transverse order $k$}. 

\smallskip
In fact, Dirichlet-to-Neumann tensors are  natural  hypersurface invariants:
\begin{proposition}\label{Redford} 
Let $(M^d_+,g^o)$ be a Poincar\'e--Einstein structure. In the case that $d$ is odd, also assume that  the boundary $\Sigma$ is conformally flat.
Then the Dirichlet-to-Neumann tensor $\operatorname{DN}^{(d)}$ is a
natural  hypersurface invariant.
\end{proposition}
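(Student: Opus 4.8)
The plan is to verify directly the two requirements in the definition of a natural conformal hypersurface invariant given in Section~\ref{D2N}, applied to the density-valued tensor $\operatorname{DN}^{(d)}=(\mathcal{L}_{\frac{\partial}{\partial r}})^{d-1}g_r\big|_\Sigma$: namely that it is a natural Riemannian hypersurface invariant (a curvature polynomial of a representative metric restricted to $\Sigma$), and that its restriction to $\Sigma$ enjoys both the defining-function independence and the conformal covariance of weight $3-d$ that make it a well-defined section of $\odot_\circ^2 T^*\Sigma[3-d]$.

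First I would fix a representative pair $(g,s)$ for the canonical defining density $\sigma$ of the Poincar\'e--Einstein structure, so that $g^o=s^{-2}g$ is the singular Einstein metric and $\bar g:=g|_{T\Sigma}\in\cc_\Sigma$. The geodesic defining function $r$ adapted to $\bar g$ is determined by its eikonal normalization relative to $g^o$, and the Fefferman--Graham recursion driven by the Einstein condition (vanishing of the trace-free Ricci tensor of $g^o$) produces the transverse jet of $r$, and hence of the compactified metric $g_r=r^2 g^o$, as universal polynomial expressions in the curvature of $g$ and covariant derivatives of $\hat s=s/|\ext s|_g$. Restricting the $(d-1)$-th Lie derivative to $\Sigma$ therefore exhibits $\operatorname{DN}^{(d)}$ as a preinvariant $i(g,s)$ built naturally from $(g,\hat s)$, involving only finitely many---precisely $d-1$---transverse derivatives, so its transverse order is finite. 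A companion check, that a perturbation $g\mapsto g+s^{d-1}h$ alters $i(g,s)|_\Sigma$ while $g\mapsto g+s^{d}h'$ does not, pins this order to $d-1$; here the parallelism of the scale tractor $I_\sigma=\hat{D}\sigma$ is convenient, since it organizes the transverse differentiation intrinsically.

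For the conformal covariance I would invoke the established Fefferman--Graham transformation law \cite{FG,GrVol}: under a change of boundary representative $\bar g\mapsto\bar\Omega^2\bar g$---equivalently $(g,s)\mapsto(\Omega^2 g,\Omega s)$ with $\bar\Omega=\Omega|_\Sigma$---the coefficient rescales by $\bar\Omega^{3-d}$, which is exactly the second defining equality with weight $w=3-d$. The defining-function independence (the first equality) I would derive from the fact that the Poincar\'e--Einstein condition is equivalent to $I_\sigma=\hat{D}\sigma$ being parallel and normalized: this makes $\sigma$, and hence both $g^o$ and the $\bar g$-adapted geodesic function, canonically determined by the structure, so that the restriction to $\Sigma$ depends only on the conformal embedding data and not on the auxiliary defining function used to represent $\sigma$; the unit normalization built into $\hat s$ renders this manifest at the level of the preinvariant. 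In odd dimensions the identical argument applies once the conformally flat boundary hypothesis is used to guarantee that the order-$(d-1)$ Fefferman--Graham obstruction vanishes, so that $g_r$ stays smooth through this order and $\operatorname{DN}^{(d)}$ is a genuine smooth tensor rather than carrying a logarithmic term.

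The step I expect to be the main obstacle is reconciling the \emph{a priori} transcendental, global character of $\operatorname{DN}^{(d)}$---it solves an eikonal equation and is provably not determined by boundary data---with the demand that it be a genuinely natural, curvature-polynomial invariant. The crux is to show that neither the construction of $r$ nor the extraction of its $(d-1)$-th normal jet ever introduces non-polynomial dependence on the data, which is precisely where the Fefferman--Graham recursion together with the Einstein condition are indispensable, the latter converting what would otherwise be uncontrolled high transverse derivatives of the metric into curvature polynomials. A secondary delicate point is verifying the defining-function independence not merely to leading order but throughout the entire $(d-1)$-jet, a subtlety resolved by the canonical nature of the Poincar\'e--Einstein scale $\sigma$ and the unit normalization of $\hat s$.
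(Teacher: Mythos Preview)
Your overall strategy is sound but considerably more elaborate than the paper's, and the difficulty you flag as the ``main obstacle'' is one you impose on yourself. The paper's proof simply \emph{chooses} the representative pair $(g,s)=(g_r,r)$, where $r$ is the geodesic defining function attached to a given $\bar g\in\cc_\Sigma$; in this gauge $\hat s=r/|\ext r|_{g_r}=r$, and the identity ${\mathcal L}_{\partial/\partial r}\,g_r=2\nabla^{g_r}\ext\hat s$ (together with its iterates) exhibits the $(d-1)$-st Lie derivative directly as a natural Riemannian polynomial in $(g_r,\hat s)$. There is no need to start from an arbitrary $(g,s)$ and then reconstruct $r$ by solving an eikonal-type equation---a step you rightly view as delicate, but which the paper simply sidesteps by working in the geodesic gauge from the outset. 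Your invocation of the ``Fefferman--Graham recursion'' for this purpose is also somewhat off-target: that recursion determines the Taylor coefficients of $g_r$ from \emph{boundary} data $\bar g$, and famously fails to determine the order-$(d-1)$ coefficient (this is precisely the content of $\operatorname{DN}^{(d)}$), so it cannot be the mechanism producing the bulk polynomial expression you want.

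For conformal covariance both you and the paper invoke~\cite{FG,GrVol} in even dimensions. In odd dimensions, however, your appeal to the vanishing of the Fefferman--Graham obstruction addresses smoothness of $g_r$ but does not by itself yield the weight-$(3-d)$ transformation law for the order-$(d-1)$ coefficient under change of $\bar g$. The paper does not claim this follows from the even-dimensional argument; it instead defers to the proof of Theorem~\ref{theoremono}, where conformal invariance is established by exhibiting an explicitly tractor-covariant expression and showing that its projecting part coincides with the Fefferman--Graham coefficient.
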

\begin{proof}
First recall that, given a Riemannian hypersurface embedding $\Sigma\hookrightarrow(M,g)$ and any defining function $s$, it is always possible to improve $s$  to $s_1=f s$ such that
$$
|ds_1|_g^2=1+{\mathcal O}(s^\ell)\, ,
$$
for any integer $\ell$, and the smooth function 
 $f$ is some local formula in terms of $s$ and $g$; see~\cite{CAG} (this is an asymptotic solution to  the well-known eikonal problem). To (arbitrarily) high order, the improved defining function $s_1$ gives the geodesic distance to the hypersurface.
 
%
%
%

Now pick $\bar g\in \cc_\Sigma$. This  determines the corresponding Graham--Lee compactified metric~$g_r$.
Then applying the above eikonal construction to $g_r$, 
starting with any defining function $s$, 
at least to sufficiently high order for our purposes, 
we obtain an asymptotic formula 
$s_{\rm GL}^{(\ell)}$
for the 
 Graham--Lee defining function $s_{\rm GL}$ as a natural formula in terms of $s$ and $g_r$ (this follows directly from the construction and uniqueness statement given in~\cite[Proposition 2.5]{CAG}). Moreover, by picking large enough $\ell$, we have that
$\hat s= s^{(\ell)}_{\rm GL}/|\ext s^{(\ell)}_{\rm GL}|_{g_r}$ equals the geodesic normal coordinate function $r$ to  any desired order.
Thus we can ignore higher order corrections in what follows.

Now recall that given a vector $x\in \Gamma(TM)$ and a tensor $T\in \Gamma(\odot^2T^* M)$, one has
$$
{\mathcal L}_x T_{ab} = x^c \nabla_c T_{ab} + (\nabla_a x^c) T_{cb}+
(\nabla_b x^c) T_{ac}
\, .$$ 
Note that the vector $\frac{\partial}{\partial r}$ determined by the normal form of the Graham--Lee compactified metric is natural because it solves the equation $g_r(\frac{\partial}{\partial r},\pdot)=\ext s_{\rm GL}$
and $s_{\rm GL}$ has a natural formula in terms of $s$.
Because $\hat s=r$, we have
 $\mathcal{L}_{\frac{\partial}{\partial r}} g_r=
2\nabla^{g_r} \ext \hat s$. Similarly, higher Lie derivatives with respect to the vector~$\frac{\partial}{\partial r}$ can also be expressed naturally in terms of $ \ext \hat s$, the metric~$g_r$, its inverse and covariant derivatives. 
Hence we have by now expressed such Lie derivatives of the Graham--Lee compactified metric $g_r$  as a natural  formula in~$s$ and the metric $g_r$.
The result follows upon remembering that for Poincar\'e--Einstein structures, the tensor $\operatorname{DN}^{(d)}$ is defined through $d-1$ Lie derivatives of the Graham--Lee compactified metric $g_r$ with respect to the vector $\frac{\partial}{\partial r}$.
\end{proof}

\color{black}

\subsection{$T$-curvatures}

Before proving our  main results we need the notion of a $T$-curvature and a result of~\cite{GPt}. Recall that the mean curvature $H^{g}$ behaves as follows under conformal rescalings
$$
H^{\Omega^2 g}= \Omega^{-1}\big(H^{g}+\delta_1 \log \Omega\big)\, ,
$$
where $\delta_1=\delta_R$ is the first of the sequence of operators defined in Equation~\nn{roughlyhalf} in the case where $w=0$ and~$\Phi$ is the trivial representation (and also in the scale $g$). 
Each {\it $T$-curvature} is a higher order generalization of the mean curvature to a natural Riemannian hypersurface invariant $T_k^g$ with conformal variation
\begin{equation}\label{skibidi}
T_k^{\Omega^2g}=\Omega^{-k} \big(T_k^g + N_k \log \Omega)\, ,
\end{equation}
where $N_k$ is any conformally invariant hypersurface operator with differential order $k$ and with leading symbol containing a term arising from an operator proportional to $\colon \nabla_{\hat n}^k\colon\hh$ for some $k\geq 2$.
The integer $k$ equals the transverse order of the  $T$-curvature $T_k$, so we shall call $T_k$ a {\it $T$-curvature of order $k$}.
For example, $T_1=H$ is the (up to an overall  non-zero coefficient)   first ~$T$-curvature and the scalar 
$$
T_2:=P_{\hat n \hat n}-\frac12 \,  H^2
 -\frac1{d-3} \bar J
$$
is also a $T$-curvature~\cite[Section 7.1]{GPt}.
Existence of $T$-curvatures of arbitrary order on any even-dimensional conformal manifold with boundary was established in~\cite[Theorem 4.16]{GPt}.
Moreover, in that work, the following result was proved:
\begin{proposition}[\cite{GPt} Proposition 6.15]\label{rehash}
Let $\Sigma \hookrightarrow (M^d,\cc)$ be a conformal hypersurface embedding with $d$ even.
Then, for any finite set of $T$-curvatures~$\{T_1^g,\ldots, T_k^g\}$,   there exists a metric  $g'\in  \cc$ such that
$$
T_i^{g'}=0\, , \quad\forall i\in\{1,\ldots, k\}\, .
$$ 
\end{proposition}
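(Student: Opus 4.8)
The plan is to build the required metric $g'=e^{2\varpi}\cg\in\cc$ by prescribing the transverse jet of $\varpi$ along $\Sigma$, exploiting the triangular structure of the $T$-curvature transformation laws. Since $\log\Omega=\varpi$ has weight zero, the law $T_i^{\,e^{2\varpi}\cg}=e^{-i\varpi}\big(T_i^g+N_i\varpi\big)$ restricts along $\Sigma$ to $T_i^{g'}\eqSig e^{-i\varpi}\big(T_i^g+N_i\varpi\big)$, and because $e^{-i\varpi}>0$ the requirement $T_i^{g'}\eqSig0$ for all $1\le i\le k$ is equivalent to solving the single system
$$
N_i\,\varpi\eqSig -T_i^g\, ,\qquad i=1,\ldots,k\, ,
$$
for one function $\varpi$, where each $N_i$ is the conformally invariant hypersurface operator of transverse order $i$ with leading symbol proportional to $\colon\nabla_{\hat n}^i\colon$.

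First I would choose a defining function $s$ for $\Sigma$ and expand $\varpi=\sum_{j\ge0}\varpi_j\,s^j$ with coefficients $\varpi_j\in C^\infty\Sigma$. Because $N_i$ has transverse order $i$ with nondegenerate leading symbol, its restriction takes the form $N_i\varpi\eqSig c_i\,\varpi_i+L_i(\varpi_0,\ldots,\varpi_{i-1})$, where $c_i$ is a nowhere-vanishing function on $\Sigma$ and $L_i$ is a tangential differential operator in the lower coefficients. The system is therefore lower triangular in $(\varpi_1,\ldots,\varpi_k)$: fixing $\varpi_0=0$, the $i=1$ equation determines $\varpi_1$, and inductively the $i$-th equation determines $\varpi_i$ from the already-known $\varpi_0,\ldots,\varpi_{i-1}$ by a pointwise division by $c_i$. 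Having so fixed the transverse $k$-jet of $\varpi$ along $\Sigma$, I would extend it to a smooth function on $M$ (for instance as $\sum_{j=1}^k\varpi_j\,s^j$ near $\Sigma$, multiplied by a cutoff and extended arbitrarily elsewhere) and set $g'=e^{2\varpi}\cg$. Since $T_i$ has transverse order $i\le k$, the value $T_i^{g'}\big|_\Sigma$ depends only on this $k$-jet, whence $T_i^{g'}\eqSig0$ for every $i\le k$, as required.

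The sole nontrivial input is the nonvanishing of each leading coefficient $c_i$, \emph{i.e.}\ that the conformally invariant operator $N_i$ realizes $\colon\nabla_{\hat n}^i\colon$ with a unit rather than degenerate leading coefficient on weight-zero densities. This is precisely where evenness of $d$ is needed: at weight $w=0$ the naive normal operators $\delta_i$ of~\nn{roughlyhalf} degrade in transverse order exactly at the anomalous weight $w=i-\tfrac d2$ (reached when $i=\tfrac d2$), and for even $d$ the improved constructions underlying the $T$-curvatures supply operators $N_i$ with the required nondegenerate leading symbol for all positive $i$; this is the content of~\cite[Theorem 4.16]{GPt}, whose construction is constructive and algorithmic. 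I expect this leading-coefficient nondegeneracy to be the main obstacle; granted it, the triangular solve above is routine.
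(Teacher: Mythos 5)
Your proof is correct and follows essentially the same route as the source: the paper does not prove this proposition but cites it from~\cite{GPt}, where the argument is exactly your inductive prescription of the transverse jet of $\log\Omega$ along $\Sigma$, using the transformation law $T_i^{\Omega^2 g}=\Omega^{-i}(T_i^g+N_i\log\Omega)$ and the fact that the nondegenerate leading symbol $\colon\nabla_{\hat n}^i\colon$ of $N_i$ makes the system lower triangular in the coefficients $\varpi_i$, with a pointwise (zeroth tangential order) solve at each step. You also correctly identify that evenness of $d$ enters only through the existence of the operators $N_i$ (equivalently of the $T$-curvatures) at the anomalous weight via~\cite[Theorem 4.16]{GPt}, not through the triangular solve itself.
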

An example of how we plan to use the above proposition   is as follows: 
The usual definition of the dimension $d\geq 4$
{\it Fialkow tensor} is (for some choice of $g\in \cc$)
$$ 
F_{ab}:=P_{ab}^\top -\bar P_{ab} +H \IIo_{ab}  + \frac12 \bar g_{ab} H^2 \stackrel\Sigma= 
\frac1{d-3}\Big(
W_{\hat n a \hat n b} +
\IIo_{ac}\IIo^c{}_b -\frac1{2(d-2)}\bar g_{ab} |\mm\IIo\hh|^2\Big)\, .
$$
The second equality above is a standard hypersurface identity establishing that the Fialkow tensor defines a conformally invariant section of $\otop\!\odot^2 T^*M [0]|_\Sigma$. 
This equation can be equivalently rewritten  as
\begin{equation}\label{Rnanb}
R_{\hat n a\hat n b }^\top
\stackrel\Sigma=
\bar P_{ab}-\frac1{d-3}\hh \bar g_{ab} \bar J
 +(d-2) F_{ab}
-\IIo_{ac}\IIo^c{}_b
+\frac1{2(d-2)}\hh \bar g_{ab} |\mm \IIo\hh|^2
-H\IIo_{ab}
+\bar g_{ab}
 T_2
 \, .
\end{equation} 
Importantly both the trace-free Fialkow tensor and trace-free second fundamental form vanish for APE structures of sufficiently high order; they are the first two conformal fundamental forms discussed above
(note also that the trace $F_a{}^a=\frac1{2(d-2)}\hh |\mm\IIo\hh|^2$). Thus for $\APE{1}$ structures, the right hand side of Equation~\nn{Rnanb} reduces to the sum of an intrinsic tensor plus a $T$-curvature.
Hence Proposition~\ref{rehash} can be used to establish that there exists a metric in the conformal class~$\cc$ such that~$R_{\hat n a\hat n b }^\top|_\Sigma$ is expressible in terms of intrinsic quantities alone when the structure is Poincar\'e--Einstein. \color{black}
The  above decomposition into intrinsic curvatures, conformal fundamental forms, and $T$-curvatures applies generally to bulk curvatures and their derivatives, see~\cite{B1}
where a classification of certain hypersurface invariants was given (note that we in particular rely on Theorem 2.5 of that work which applies for $d$ even). It also underlies the strategy for the proof of Theorem~\ref{zero-invt}, where it is used to show that 
derivatives of ambient curvatures can be re-expressed in terms of intrinsic invariants, conformal fundamental forms and $T$-curvatures. The last of these  can be eliminated using Proposition~\ref{rehash}.

\medskip




In the case of Poincar\'e--Einstein structures there is a particularly simple $T$-curvature construction. This involves log densities given by $\log \tau$, where $\tau$ is a true scale.  These are
defined in~\cite{GW}, but the details are not important here.
The key point is that the {\it Laplace--Robin operator}~$I_\sigma. D$ acting on a log density returns a density of weight $-1$, according to
\begin{equation}\label{quantity}
I_\sigma. D \log \tau 
= \frac{d-2}\tau
I_\sigma.\hat D \tau + \frac{\si}{\tau^2} \, (\hat D \tau)^2\in \Gamma(\ce M[-1])\, .
\end{equation}
Then, following~\cite{GPt}, a $T$-curvature of order $k\leq  \lfloor \frac d2 \rfloor$ is given by 
$$
t_k^g:=\delta_R (I_\sigma. D)^{k-1}  \log \tau \big|_\Sigma\, .
$$
Notice that $t_1=\delta_R \log \tau|_\Sigma=-H$ because $\delta_R$ can here be replaced by $(d-2)^{-1} I_\sigma.D$ restricted to $\Sigma$.

When $d$ is even, a version of the above construction applies to $k>\frac d2$. This is based on  a polynomial weight continuation of the operator $\delta_R (I_\sigma. D)^{k-1}$  acting on densities of weight $w$; again see~\cite{GPt}. 
Let us now explain; 
for simplicity take  $k$  odd. Then one has
\begin{equation}\label{factors}
\delta_R (I_\sigma. D)^{k-1} = \hat P_k (d+2w-2k +2)(d+2w-2k+4) \cdots (d+2w -k-1)\, ,
\end{equation}
where the operator $\hat P_k$ is conformally invariant, even when  acting on weights $w$ such that any of the factors to its right vanish. 
It can be expressed in terms of bulk covariant derivatives, curvatures,  the scale $\sigma$, and depends polynomially on $w$. 

\begin{remark}
The 
operator $\hat P_{k}$ is proportional to the operator $$\delta_{J,k-J}:\Gamma(\ce M[k-J-\tfrac d2])\to \Gamma(\ce \Sigma[-J-\tfrac d2])\, ,$$  for some $J \in \{1,\ldots,\frac{k-1}2\}$ and odd $k>\frac d2$, constructed in~\cite{GPt} (see Equation~\eqref{normalops} above).
This corresponds to removing 
a factor $(d+2w-2k+2J)$ from the right hand side of Equation~\eqref{factors} and then continuing in $w$ to the critical value $w=k-J-\frac d2$.
\end{remark}

The action of $\hat P_k$  on a log density is defined by replacing  
appearances of $w$ by the operator $\underline w$ placed on the far right of any term. Note that $\underline w \log \tau = 1$.
Based on~\cite{GPt}, it is easy to  show that 
\begin{equation}\label{Pk}
t_k := \hat P_k \log \tau|_\Sigma
\end{equation}
defines a $T$-curvature for any odd $k>\frac d2$.
(A very similar  construction applies to even $k$ but is not important here.)
We now  characterize these $T$-curvatures for the Graham--Lee scale of a Poincar\'e--Einstein structure.
\begin{proposition}\label{fencepost}
Let $(M^d,\bg,\sigma)$ be Poincar\'e--Einstein. 
Then if $g_{r}$ is the Graham--Lee compactified metric 
for some $\bar g\in \cc_\Sigma$,
 then for any odd integer $k\leq \lfloor \frac d2 \rfloor$, 
$$
t^{g_{r}}_k =0\, .
$$
\end{proposition}

\begin{proof}
It is well known that the mean curvature of $\Sigma\hookrightarrow(M,g_r)$ vanishes so $t_1^{g_r}=0$.

Now let $\tau$ be any true scale and $g$ the corresponding compactified metric and call $s=\sigma/\tau$.
Then the quantity appearing in Equation~\eqref{quantity} (divided by $d-2$) can be expressed as
$$
\frac{I_\sigma.\hat D \tau}\tau + \frac1{d-2} \frac{\si}{\tau^2} \, (\hat D \tau)^2\stackrel g= -\frac{\Delta^{g} s+ J^{g}s}{d} - \frac{s}{d-2}  J^{g}\, .
$$
Here we have placed a $g$ above the equality sign to indicate that we have trivialized the density bundle~$\ce M[-1]$ using the choice of metric $g$ to obtain the right hand side of the above display. This notational device will appear again in the below.
Now, if we specialize the above formula to the Graham--Lee compactified metric $g_r$, then $s =r$ the geodesic distance to the boundary. Moreover the Graham--Lee metric has the normal form
$
g_r= dr^2 + h(r)
 $.
 We shall say that a polyhomogeneous expansion in the variable $r$ is even/odd 
  to order $m$ if there are no odd/even terms of order $m$ or less and there are no terms involving $r^k\log r$ for $k\leq m$. 
  For a Poincar\'e--Einstein structure,  $h(r)$  has an even expansion in the coordinate $r$ up to order $d-2$~\cite{FG}.
  Hence the above display has an  odd expansion in $r$ to  order $d-3$ (a slightly stronger statement is available when  $d$ is even, see the proof of Proposition~\ref{higher}). Both the Laplace--Robin and conformal-Robin operator  are  odd operators with respect to $r$ to some order. Indeed, acting on a density of weight $w$, employing $r$ as a coordinate in a collar neighborhood of~$\Sigma$, one has
  $$
I_\sigma. D\stackrel{g_r} = (d+2w-2)\partial_r -r \partial_r^2 + \operatorname{odd}(r)\, ,
  $$
  where (the operator) $\operatorname{odd}(r)$ stands 
  for terms that are odd 
   to order $d-3$. 
    Thus, acting at weight $w=0$ and in the Graham--Lee scale
 $$
 \delta_R 
 (I_\sigma. D)^{k-2}=
 (\partial_r + \operatorname{odd}(r))
 \underbrace{ \big(
  (d-2k+4) \partial_r - r \partial_r^2 + \operatorname{odd}(r)\big)
  \cdots
   \big(
  (d -2) \partial_r - r \partial_r^2 + \operatorname{odd}(r)\big)}_{k-2\:  \mbox{\small terms}}\,  .
 $$
 Hence 
 $\delta_R (I_\sigma. D)^{k-1}  \log \tau $ is odd to order  $d-2-k$ and hence clearly vanishes along $\Sigma$ when $k\leq \lfloor \frac d2\rfloor$ and $k$ is odd.
 \end{proof}

Specializing to even dimensions $d$ we have a stronger statement.

\begin{proposition}\label{higher}
Let $(M^d,\bg,\sigma)$ be Poincar\'e--Einstein and let $d$ be even.
Then if $g_r$ is the Graham--Lee compactified metric
for some $\bar g\in \cc_\Sigma$, then for any odd integer $k\leq d-1$, 
$$
t^{g_r}_k =0\, .
$$
\end{proposition}

\begin{proof}
The case when $k\leq \frac d2$ was  dealt with in 
Proposition~\ref{fencepost}. Hence we consider the $T$-curvatures defined in Equation~\eqref{Pk}.
The Laplace--Robin operator is given for some choice of $g\in\cc$ by
$$
I_\sigma. D = (d+2w-2) (\nabla_n + w \rho) - \sigma (\Delta + w J)\, ,
$$
where $\rho:=-\frac1d (\Delta \sigma + J \sigma)$.
Hence we see  that  $\hat P_k$ can be expressed as a sum of words of length $k$ built partly  from the four letters 
$$
\alpha:=
\nabla_n\, ,\quad \beta:=\rho\, ,\quad \gamma:=\sigma \Delta\, ,\quad
\delta :=  \sigma J\,  .
$$
We must also allow a fifth letter
$$
 \varepsilon:=\underline w
$$
that may appear as many as $2k-1$ times at the far right of any word.
It is given by the weight operator $\underline w$ which returns a factor of $w$, the weight of a density, when acting on such. Also, recall that~$\underline w \log \tau = 1$~\cite{GW}. 
As an example, $I_\sigma.D$ is expressed as a sum of the words  $ \alpha$, $\alpha \varepsilon$, $\beta \varepsilon$, $\beta \varepsilon^2$,~$\gamma$ and~$ \delta$.
Viewed as operators, in the Graham--Lee scale, each of the four letters $\alpha, \beta, \gamma, \delta$ is odd up to high order, or more precisely can be expressed as a sum of terms
$$
\partial_r\, ,\quad r\partial_r^2\, , \quad \operatorname{odd}(r)\, ,
$$
where $\operatorname{odd}(r)$ stands for terms odd in $r$ to order $d-3$. Also, in the Graham--Lee scale, one has $\log \tau =0$, $\nabla_n \log \tau =0$, $\Delta \log \tau=0$, while in general  $\underline w \log \tau = 1$ and  $\underline w^\ell \log \tau = 0$ for $\ell\geq 2$.
One can now apply the same parity argument as in proof of Proposition~\ref{fencepost} to establish that~$t_k$ vanishes for odd $k<d-1$. For the case $t_{d-1}$,  we must also carefully examine the highest $r$-derivatives of the terms $\operatorname{odd}(r)$. It not difficult to see that along~$\Sigma$ (where $r=0$) the highest order term is
$
\partial_r^{d-2} \operatorname{odd}(r)|_{r=0}
$.
Since this quantity is a scalar, a simple analysis of the operators appearing in $\alpha, \beta, \gamma, \delta$ shows that the above is proportional to 
$$
\operatorname{tr}_{h(r)}\partial_r^{d-1} h(r)\big|_{r=0}\, .
$$
This is precisely the boundary trace of the image of the Dirichlet-to-Neumann map. The latter is trace-free (see~\cite{FGbook}). 
\end{proof}

A uniqueness argument establishes  a vanishing result for odd order $T$-curvatures.
\begin{theorem}\label{skibido}
Let
$(M^d,\bg,\sigma)$ be Poincar\'e--Einstein and $d$ be even.
Then, if~$g_r$ is the Graham--Lee compactified metric
for some $\bar g\in \cc_\Sigma$,
any $T$-curvature of odd order $k\leq d-1$ vanishes.
\end{theorem}

\begin{proof}
We have  established that the odd order $T$-curvatures $t_k$ vanish, 
so it only remains to establish uniqueness of 
 odd order $T$-curvatures $T_{k\leq d-1}$ in the current setting.

On an even dimensional Riemannian manifold, every natural hypersurface invariant with transverse order $k$ may be expressed as a (partial) contraction polynomial in
$$\{\bar{g},\bar{g}^{-1}, \bar{\nabla}, \bar{R}, \hat{n}, \IIo, \otop \colon \nabla_{\hat n} \colon P, \ldots, \otop \colon \nabla_n^{k-2} \colon P, H, J|_{\Sigma}, \ldots, \colon \nabla_{\hat{n}}^{k-2} \colon J|_{\Sigma}\}\,;$$
see~\cite[Theorem 2.5]{B1}. 
Notably, on a Poincar\'e--Einstein manifold, when $k \leq d-2$, this family reduces to
$$\{\bar{g},\bar{g}^{-1}, \bar{\nabla}, \bar{R}, \hat{n}, H, J|_{\Sigma}, \ldots, \colon \nabla_{\hat{n}}^{k-2} \colon J|_{\Sigma}\}\,.$$
To see this, firstly note that conformal fundamental forms exist up to $\FFo{d-1}$ for embeddings in even dimensional conformal manifolds~\cite[Proposition 3.6]{B}. Moreover, normal derivatives of~$P$  may be expressed in terms of these conformal fundamental forms as well as terms from the list directly above; see~\cite[Corollary 3.3]{B1}.
However, on Poincar\'e--Einstein manifolds, all conformal fundamental forms up to and including $\FFo{d-1}$ (which has transverse order $d-2$) vanish~\cite[Theorem 1.8]{BGW1}. 
We wish to consider transverse order as high as $k=d-1$, and so must in principle also  include $\otop \colon \nabla_{\hat{n}}^{d-3} \colon P$ in the above family of terms.


By the transverse order requirement for  $T$-curvatures, 
the log term of Equation~\eqref{skibidi} must take the form
\begin{equation}\label{matchme}N_k \log \Omega = \Omega^{-1} \colon \nabla^k_{\hat{n}} \colon \hh\Omega + \ltots\, ,
\end{equation}
where ``ltots'' stands for terms involving only  lower transverse order operators acting on  $\Omega$. 
Also, it follows from the conformal transformation rule for the Schouten tensor $P$, that
$$J^{\Omega^2 g} 
\= J^g - \Omega^{-1} \colon \nabla_{\hat n}^2\hh \colon \Omega+\ltots{}\, .$$
And so, for $2 \leq k \leq d-2$, the only term in our family with the correct transformation law (matching Equation~\nn{matchme})
at leading transverse order is $\colon \nabla_{\hat n}^{k-2} \colon J|_{\Sigma}$. (Note that at order $k=d-1$, the  term $\otop \colon \nabla_{\hat n}^{d-3} \colon P$
is precluded from appearing in the leading transverse order term of our putative $T$-curvature by a combination of its
 its tensor structure and a homogeneity argument.)
This establishes that, on a Poincar\'e--Einstein manifold, any $T$-curvature $T_k$ has leading transverse-order term 
proportional to $\colon \nabla_{\hat n}^{k-2} \colon J|_{\Sigma}$.

Under constant conformal transformations $g \mapsto \lambda^2 g$, any $T$-curvature $T_k$ is homogeneous of weight $-k$. But $\bar{g}$, $\bar{g}^{-1}$, $\bar{\nabla}$, $\bar{R}$, and all even-order $T$-curvatures have \textit{even} homogeneity. Thus, homogeneity implies that each subleading term in any {\it odd} $T$-curvature can  be expressed in a form that  contains at least one (lower-order) odd $T$-curvature.
(Here we used a tensor plus weight argument to preclude terms involving $\hat n$.)

The proof is completed  by induction in the order of $T$-curvatures.
 The base case relies  
on the fact that any first $T$-curvature $T_1\propto H$ which vanished in the Graham--Lee scale.
Also,  we have just shown that distinct odd order $T$-curvatures differ only by terms that can be expressed in terms of lower order $T$-curvatures. Consulting  Proposition~\ref{higher} completes the proof.
\end{proof}

\subsection{Even dimensional bulk}

Here we focus on establishing Theorem~\ref{peachy} concerning even-dimensional Poincar\'e--Einstein structures.
The following theorem is key to the proof.

\begin{theorem} \label{zero-invt}
Let $\Sigma \hookrightarrow (M^d,\cg)$
be a conformal hypersurface embedding and  $(M^d,\cg,\sigma) \in \APE{k-2}$ with  $2 \leq k \leq d-1$ and $d$ even. Moreover let $I \in \Gamma({\mathbb T}\Sigma[w])$, with $w \in 2 \mathbb{Z}+1$, be a natural  conformal hypersurface invariant with transverse order $0 \leq m \leq k-1$. Then, $I = 0$.
\end{theorem}

\begin{proof}
 We  rely on the decomposition discussed above.
By definition, natural hypersurface invariants  
are built from polynomials in Riemann curvatures, unit conormals, their (covariant and possibly tangential) derivatives, and metric contractions thereof. Since we are studying sections $I$ of~${\mathbb T}\Sigma$, 
we may always deal with expressions such that all (undifferentiated) unit conormals are contracted into Riemann curvatures and their derivatives
(a pair of unit conormals can always be re-expressed as $\hat n_a \hat n_b  = g_{ab}|_\Sigma-\bar g_{ab}$ while no invariant proportional to an overall factor  of a single $\hat n_a$ can live in $\Gamma({\mathbb T}\Sigma)$).

\smallskip
 We now analyze when $I$ can have a given transverse order.
Consider first the tensor
\begin{equation}\label{Xfactor}
X_{ab}^{(\ell)}:=\hat n^c \hat n^d \hh\colon \nabla_{\hat n}^\ell\colon \hh R_{cadb}\big|_\Sigma \in \Gamma({\mathbb T}\Sigma)\, .
\end{equation}
Clearly $X_{ab}^{(\ell)}$ clearly has transverse order at most $\ell+2$; this  fits with  Equation~\nn{Rnanb} because~$T_2$ and the trace-free Fialkow tensor have transverse order~$2$. To see that the transverse order is exactly $\ell+2$, we study how $X^{(\ell)}_{ab}$ behaves upon replacing $g$ by $g+s^{q} h$. For that we  consider the linearization of the Riemann tensor around $g$ for the perturbation $g + s^q h$, namely
\begin{align}
R_{abcd}^{g+s^{{q}} h}\!\!-\!\!R_{abcd}^g&=-\tfrac12\big(\nabla^g_a \nabla^g_c (s^{q} h_{bd})
-\nabla^g_b \nabla^g_c (s^{q} h_{ad})
-\nabla^g_a \nabla^g_d (s^{q} h_{bc})
+\nabla^g_b \nabla^g_d (s^{q} h_{ac}) 
\big)+{\mathcal O}(s^{{q}-1})
\nonumber
\\[1mm]
&=-\tfrac{{q}({q}-1)}2 s^{{q}-2}\big(\hat n_a \hat n_c  h_{bd}
-\hat n_b \hat n_c h_{ad}
-\hat n_a \hat n_d  h_{bc}
+\hat n_b \hat n_d  h_{ac} \big)
+{\mathcal O}(s^{{q}-1})\, ,
\label{samisirked}
\end{align}
where the notation $\hat n$ has been recycled to denote any smooth extension  of $\hat n$ to $M$, similarly below we will employ the notation $\top$
to denote any extension of the corresponding projector to~$M$. 
In turn we have
 $$\top\Big(
\hat n^c \hat n^d  \hh\colon \nabla_{\hat n}^\ell\colon \hh\big(R_{cadb}^{g+s^{\ell+2} h} -
R_{cadb}^{g}\big)\Big) =
 - \tfrac {(\ell+2)!}2  h_{ab}^\top + {\mathcal O}(s)\, .
 $$
This establishes that $X_{ab}^{(\ell)}$ has transverse order $\ell+2$.  
Along similar lines, inspecting the (suitably manipulated) Codazzi and Gau\ss\  relations
\begin{align}
\label{cod}
R_{abc \hat n}^\top &\eqSig \bar{\nabla}_a \IIo_{bc} - \bar{\nabla}_b \IIo_{ac}+
\bar g_{bc} \bar \nabla_a H - \bar g_{ac} \bar \nabla_b H
\,,\\[2mm]
\label{ga}
R_{abcd}^\top &\eqSig \bar{R}_{abcd} - \IIo_{ac} \IIo_{bd} + \IIo_{ad} \IIo_{bc}\nonumber
\\&\qquad\qquad\!\!
-H(\bar g_{ac} \IIo_{bd}
-\bar g_{bc} \IIo_{ad}
-\bar g_{ad} \IIo_{ba}
+\bar g_{bd} \IIo_{ac}
)
-(\bar g_{ac} \bar g_{bd} - \bar g_{bc} \bar g_{cd})H^2
\,,
\end{align}
shows that 
$Y_{abc}^{(\ell)}:=\big( \hat n^d \hh\colon \nabla_{\hat n}^\ell\colon \hh R_{abcd}\big)^\top$
and
$Z_{abcd}^{(\ell)}:=\big(\colon \nabla_{\hat n}^\ell\colon \hh R_{abcd}\big)^\top$  both
have at most transverse order  $\ell+1$. Indeed, a similar linearized Riemann argument to the above establishes that their transverse orders are exactly $\ell+1$; the tensors $X$, $Y$, $Z$, and the second fundamental form~$\II$ are the basic atoms from which the invariant $I$ is produced via contractions and hypersurface derivatives. Equations~\nn{cod} and~\nn{ga} satisfy the same type of decomposition as outlined for Equation~\nn{Rnanb},
namely into intrinsic, vanishing-for-$\APE{k}$-structures ($k$ sufficiently high), and (hypersurface derivatives of) $T$-curvatures.

We next need to show that the above decomposition property holds upon  suitable  application 
of normal derivatives~$\colon\nabla_{\hat n}^\ell\colon$
required to construct $X$, $Y$ and $Z$.
This has been established in a slightly different context
in~\cite{B1}. The argument is as follows:
first we need to show that normal derivatives of the ambient Schouten tensor, or its trace, can be traded for 
conformal fundamental forms and~$T$-curvatures at the cost of lower transverse order terms. 
Therefore we  must express~$X$,~$Y$ and~$Z$ in these terms.
Indeed, a Bianchi identity 
maneuver can be employed to reduce expressions involving $Y$ and $Z$ (at leading transverse order) to ones involving~$X$.
Consider, for example (refering to~\cite{B1} for the $Z$ case), 
\begin{multline*}
Y_{abc}^{(\ell)}
= \big(\hat n^d\hat n^e
\colon\nabla_{\hat n}^{\ell-1}\colon
\nabla_e  
R_{abcd}\big)^\top
=-\big(\hat n^d\hat n^e  \colon\nabla_{\hat n}^{\ell-1}\colon
 (\nabla_a R_{becd}+\nabla_b R_{eacd})\big)^\top\\
 =-\big(
 \nabla_a^\top (\hat n^d \hat n^e
 \colon\nabla_{\hat n}^{\ell-1}\colon
R_{becd})
- \nabla_b^\top 
( \hat n^d \hat n^e
 \colon\nabla_{\hat n}^{\ell-1}\colon R_{eacd})\big)^\top+{\rm ltots}\\
 =-\bar \nabla_a X_{bc}^{(\ell-1)}
 +\bar \nabla_b X_{ac}^{(\ell-1)}+\ltots
 \, .
\end{multline*}
Here  ``ltots'' stands for lower transverse order tensors.

\smallskip

Now we can focus on $X^{(\ell)}$.
The case $\ell=1$ has the desired decomposition by virtue of Equation~\nn{Rnanb}. For higher $\ell$, we examine 
Equation~\nn{Xfactor}, which can be re-expressed as
\begin{align*}\label{Xfactor}
X_{ab}^{(\ell)}&=\big(\hat n^c \hat n^d \hh\colon \nabla_{\hat n}^\ell\colon \hh W_{cadb}
+
\colon \nabla_{\hat n}^\ell\colon P_{ab}\big)^\top
+ \bar g_{ab} \hat n^c \hat n^d  \hh\colon \nabla_{\hat n}^\ell\colon P_{cd}
+\ltots{}
\, .
\end{align*}
Next (remembering that identically $\nabla^e W_{caeb}=(d-3)(\nabla_c P_{ab}-\nabla_a P_{cb})$), notice that
\begin{align*}
\big(
\hat n^c \hat n^d \hh\colon \nabla_{\hat n}^\ell\colon \hh W_{cadb}\big)^\top
&=
\big(
\hat n^c (g^{de}- \bar g^{de}) \hh\colon \nabla_{\hat n}^{\ell-1}\colon \hh \nabla_e W_{cadb}
\big)^\top\\[1mm]
&=(d-3)\hh
\big(
\hat n^c \hh\colon \nabla_{\hat n}^{\ell-1}\colon \hh
(\nabla_c P_{ab}-\nabla_a P_{cb})\big)^\top
+\ltots{}
\\[1mm]
&=
(d-3)\hh
\big(
 \hh\colon \nabla_{\hat n}^{\ell}\colon \hh
 P_{ab}\big)^\top
+\ltots{}
\, ,
\end{align*}
so that
$$
X_{ab}^{(\ell)}
=(d-2)\big(
 \hh\colon \nabla_{\hat n}^{\ell}\colon \hh
 P_{ab}\big)^\top
 +\bar g_{ab} \hat n^c \hat n^d  \hh\colon \nabla_{\hat n}^\ell\colon P_{cd}
+\ltots{}\, .
$$
Now, the hypersurface trace-free part of the  first term on the right hand side above  can be re-expressed as 
an $(\ell+3)^{\rm th}$ conformal fundamental form plus lower transverse order terms---again see
~\cite[Corollary 3.3]{B1}.
The trace as well as the second term above can be written in terms of $T$-curvatures modulo lower transverse order terms. To see that, we may focus on the hypersurface trace
\begin{align*}
\bar g^{ab} X^{(\ell)}_{ab}&=
(d-2)(g^{ab}-\hat n^a \hat n^b) 
 \hh\colon \nabla_{\hat n}^{\ell}\colon \hh
 P_{ab}+(d-1)\hh\hat n^c \hat n^d  \hh\colon \nabla_{\hat n}^\ell\colon P_{cd}
+\ltots{}
\\
&=
(d-2) 
 \hh\colon \nabla_{\hat n}^{\ell}\colon \hh
 J
 + \hat n^a \hat n^b  \hh\colon \nabla_{\hat n}^\ell\colon P_{ab}
+\ltots{}\\
&=
(d-2) 
 \hh\colon \nabla_{\hat n}^{\ell}\colon \hh
 J
 + \hat n^a 
 g^{bc}
  \hh\colon \nabla_{\hat n}^{\ell-1}\colon \nabla_c P_{ab}
+\ltots{}
\\
&=
(d-1) 
 \hh\colon \nabla_{\hat n}^{\ell}\colon \hh J
 +\ltots{}
\, .
\end{align*}
In the above we used $\nabla^a P_{ab}=\nabla_b J$.
We have by  now shown,  at leading transverse order,  that  any tensor built from $X$, $Y$, $Z$, and $\II$ (as well as gradients thereof, and the hypersurface metric), can be expressed in terms of $H$, normal derivatives of~$J$, conformal fundamental forms, and possibly intrinsic tensors.
It remains to focus on normal derivatives of $J$.
 We know that $T$-curvatures are Riemannian hypersurface invariants built from~$X$, $Y$, $Z$, $\II$, covariant derivatives thereof, and  hypersurface metrics.  
So from the above display, it must be that normal derivatives of $J$ can be expressed in terms of~$T$-curvatures, conformal fundamental forms, and  intrinsic tensors.
 Hence $X$, $Y$, and~$Z$
themselves (and any tensor built therefrom) are also expressible (at leading order) in terms of conformal fundamental forms, $T$-curvatures, and possibly intrinsic tensors.
By descent in the transverse order, it follows that this statement holds to all orders.

Recall that   $X_{ab}^{(\ell)}$ has transverse order $\ell+2$ and $I$ has transverse order  at most $m=k-1$. So 
there exists an expression for $I$ such that for any appearance
 of $X$   (from above we have that~$Y$ and~$Z$ can be reduced to  terms involving only  $X$
 at leading order) one has that that $\ell$ is not greater than $k-3$. But for  $\APE{k-2}$ structures the $k{}^{\rm th}$ fundamental form (which has transverse order $k-1$), and all lower 
order 
 fundamental forms vanish. Hence, $I$ is expressible in terms of $T$-curvatures and intrinsic tensors alone. 
 But (see~\cite[Theorem 6.15]{GPt} or Proposition~\ref{rehash}) there exists a scale for which these~$T$-curvatures vanish. 
 Moreover, natural  intrinsic curvatures are composed of sums of products of the Riemann tensor, metrics as well as covariant derivatives and traces thereof, all of which have even homogeneity under constant metric rescalings. 
 Thus, because the weight of~$I$ is odd, there are no such intrinsic tensors available as these are taken to be (polynomially) built from $\bar g$, $\bar g^{-1}$, $\bar R$, $\bar\nabla$ and contractions thereof. 
This establishes that~$I$ vanishes for one choice of scale, and hence by its conformal invariance, $I=0$.

\end{proof}

\begin{remark}
Note that another approach to establishing this result is to examine the tensor structures that can appear as coefficients in the Fefferman--Graham expansion of an $\APE{k-2}$ metric; this approach might be used to write a more general result for $d$ odd.
\end{remark}

\bigskip

We will need  explicit formul\ae\
for Dirichlet-to-Neumann tensors in even dimensions in order to prove Theorem~\ref{peachy}.

\begin{lemma} \label{DN-formula-even}
Let $(M^d_+,g^o)$ be a Poincar\'e--Einstein structure with $d\geq 6 $ even. 
Then,
$$\operatorname{DN}^{(d)}_{ab} \propto \otop\hh  \colon \nabla_{\hat n}^{d-5} \colon\hh B_{ab}^{g_r}\,,$$
where the right hand side is evaluated on the Graham–Lee compactified metric $g_r$ used to define the left hand side.
\end{lemma}
\begin{proof}
By definition, the image of the Dirichlet-to-Neumann map of a metric representative $\bar{g} \in \cc_{\Sigma}$ is given by $(\mathcal{L}_{\frac{\partial}{\partial r}})^{d-1} g_r\big|_{\Sigma}$, where $g_r$ is the corresponding Graham--Lee compactified metric.
 Calling $n=\ext r$, $n^\sharp:= g_r^{-1}( n,\pdot)$, and using that $|n|=1$, it follows that
\begin{align*}
\operatorname{DN}^{(d)}_{ab} = (\mathcal{L}_{n^\sharp})^{d-1} g_{ab} \eqSig (\mathcal{L}_{n^\sharp})^{d-2} (\nabla_a n_b + \nabla_b n_a).
\end{align*}
Now evidently, the leading derivative term in the above is the same as that of
$$2 \nabla_{n^\sharp}^{d-2} \nabla_{(a} n_{b)}\,.$$
Moreover, the natural hypersurface invariant $\operatorname{DN}^{(d)}_{ab}-2 \nabla_{n^\sharp}^{d-2} \nabla_{(a} n_{b)}$
has  transverse order no larger than $d-2$. 
So from~\cite{B1}, it is expressible entirely in terms of
Riemannian invariants intrinsic to~$\Sigma$, 
conformal fundamental forms ranging  from $\IIo$ to $\FF{d-1}$,
and  $T$-curvatures up to order $d-2$. However, as $(M^d_+,g^o)$ is Poincar\'e--Einstein, it follows from~\cite{BGW1} that all conformal fundamental forms from $\IIo$ to $\FF{d-1}$ vanish. 
 Hence, by the hypersurface invariant decomposition of type~\eqref{Rnanb} established earlier  in this section, the difference in question can be expressed solely in terms of intrinsic invariants and $T$-curvatures. Now, because this difference has a definite \textit{odd} homogeneity of $3-d$ under constant rescalings of the metric $g \mapsto \lambda^2 g$, and because intrinsic invariants always have even homogeneity, it follows that every summand in any such decomposition must involve at least one occurence of an odd-order $T$-curvature. But, as $g$ is a Graham--Lee compactified metric, it follows from 
 Theorem~\ref{skibido} that all odd  order $T$-curvatures up to order $d-1$ inclusive vanish, and so 
$$\operatorname{DN}^{(d)}_{ab} = (\mathcal{L}_{n^\sharp})^{d-1} g_{ab}|_{\Sigma} = 2 \nabla_n^{d-2} \nabla_{(a} n_{b)}|_{\Sigma}\,.$$

Now, using the Ricci identity, we have the following identity:
\begin{align*}
\nabla_n \nabla_a n_b &= n^c \nabla_c \nabla_a \nabla_b r \\
&= n^c R_{cabd} n^d + n^c \nabla_a \nabla_b n_c \\
&= R_{nabn} - (\nabla_a n^c) (\nabla_b n_c) + \tfrac{1}{2} \nabla_a \nabla_b n^2 \\
&= R_{nabn} - (\nabla_a n^c)(\nabla_b n_c)\,,
\end{align*}
where the last identity follows because $n^2 = 1$. Now using this identity in the formula for $\operatorname{DN}^{(d)}$, and throwing away lower transverse
order terms
because they either involve (vanishing) odd order $T$-curvatures or conformal fundamental forms, we have that
$$\operatorname{DN}^{(d)}_{ab} = 2 \nabla_n^{d-3} R_{nabn}\,.$$
Now
applying the same reasoning as directly above to the decomposition of Riemann into its Weyl and Schouten tensor pieces, we may in turn express this as
$$\operatorname{DN}^{(d)}_{ab} = -2(d-2) \otop\hh  \colon \nabla_{\hat n}^{d-3}  \colon\hh P_{ab}\, .$$
Finally, using Equation~\eqref{Johanne} we have $ \otop B=\otop \nabla_n^2 P  $ modulo lower order terms (which again vanish  by similar reasoning to above), we have that
$$\operatorname{DN}^{(d)}_{ab} \propto \otop\hh  \colon \nabla_{\hat n}^{d-5}  \colon\hh B_{ab}\,,$$
thus completing the proof.
\end{proof}

We are now ready to prove  Theorem~\ref{peachy}.

\begin{proof}[Proof of Theorem~\ref{peachy}]
To unify the $d=6$ and $\geq 8$ formul\ae, let us denote
 $\delta_{0,1}:=\delta_R$.
 We shall
  begin by checking that $\bar{q}^* \otop \delta_{\tfrac{d-6}{2},\tfrac{d-4}{2}} W$ has the required leading transverse order and that the tractor~$ \delta_{\tfrac{d-6}{2},\tfrac{d-4}{2}} W$ has a projecting part with the correct tensor structure---this will establish that the leading transverse order term in~$\FFdn$ is as required by the theorem. (Note that by its  construction, $\FFdn$ is clearly a natural conformal hypersurface invariant.)
\color{black}

\medskip

From~\cite[Theorem 3.4]{B1} we have that $\delta_{\frac{d-6}{2},\frac{d-4}{2}}$ acting on weight $-2$ tractors has transverse order $d-5$, and thus 
$$\delta_{\frac{d-6}{2},\frac{d-4}{2}} W \propto \colon \nabla_{\hat n}^{d-5} \colon W + 
\ltots\, .
$$
The highest transverse order tensor component of the $W$-tractor is the Bach tensor, so
 $$(d-4)\hh W_{ABCD}\stackrel {\sss{\mathcal T}_g}={4}X_{[A} Z_{B]}^a X_{[C} Z_{D]}^b B_{ab}^g  + \ltots
\,.$$
(Here $g \in \cc$ is any choice of metric representative and we employ this choice for the remainder of this proof.)
Thus, we have that
\begin{equation}\label{iamabove}\otop\delta_{\frac{d-6}{2},\frac{d-4}{2}}W_{ABCD} \propto  \otop X_{[A} Z_{B]}^a X_{[C} Z_{D]}^b  \colon \nabla_{\hat n}^{d-5} \colon B_{ab}^g + \ltots{d-2}\,,\end{equation}
where $\ltots{d-2}$ are tensor-valued tractors with transverse order less than or equal to $d-2$.  
The $d\geq 4$ Bach tensor $B_{bd}=\big(\frac{1}{d-3}\nabla^a \nabla^c +P^{ac}\big)W_{abcd}$
has transverse order~$4$.
To see 
this we apply Equation~\nn{samisirked} to the  Weyl tensor  and find
\begin{multline*}
\!\!\!\!\!\hh W_{abcd}^{g+s^{\ell} h}\!\!-\!\!W_{abcd}^g=
-\tfrac{\ell(\ell-1)}{2(d-2)} 
s^{\ell-2}
\Big(\big[(d-3)\hat n_a \hat n_c -\bar g_{ac} \big]h_{bd}^{\mathring\top}\hh
-\big[(d-3)\hat n_b \hat n_c -\bar g_{bc} \big]h_{ad}^{\mathring\top}\quad
\\
\qquad\qquad
-\big[(d-3)\hat n_a \hat n_d -\bar g_{ad} \big]h_{bc}^{\mathring\top}
+\big[(d-3)\hat n_b \hat n_d -\bar g_{bd} \big]h_{ac}^{\mathring\top}
 \Big)
+{\mathcal O}(s^{\ell-1})\, ,
\end{multline*}
where 
$\bar g_{ab}$  is any extension of $g_{ab}-\hat n_a \hat n_b$ to $M$ and
$h_{ab}^{\mathring\top}$
any such extension of $\bar g_{ac} \bar g^{cd} h_{db}-\frac1{d-1} \bar g_{ab } \bar g^{cd} h_{cd}$. 
Hence for the Bach tensor we obtain
$$
B_{ab}^{g+s^{\ell} h}\!-B_{ab}^g=
-\tfrac{\ell(\ell-1)(\ell-2)
(\ell-3)}{2(d-2)} 
s^{\ell-4}
h^{\mathring\top}_{ab}
+{\mathcal O}(s^{\ell-3})\, .
$$
This establishes the claimed transverse order result.
In turn, it follows that the tractor in Equation~\nn{iamabove} has transverse order $d-1$. It is also now clear that, for generic structures, the 
coefficient of $\otop X^{\phantom a}_{[A} Z_{B]}^a X^{\phantom a}_{[C} Z_{D]}^b$
in $\otop \delta_{\frac{d-6}{2},\frac{d-4}{2}} W$ is non-zero.

Now suppose  
the projecting part of $\otop\delta_{\frac{d-6}{2},\frac{d-4}{2}}W_{ABCD}$
is not 
 proportional to $\otop X^{\phantom a}_{[A} Z_{B]}^a X^{\phantom a}_{[C} Z_{D]}^b$. In that case it must consist of terms appearing among those labeled $\ltots{d-2}$ in Equation~\eqref{iamabove}.
 But then the putative  projecting part of $\otop\delta_{\frac{d-6}{2},\frac{d-4}{2}}W_{ABCD}$ would have both odd weight and transverse order strictly less than $d-1$ and thus vanish by Theorem~\ref{zero-invt}.
 Thus we must now have
\begin{align*}
\hh\otop\hh\delta_{\frac{d-6}{2},\frac{d-4}{2}}W_{ABCD} 
\propto   \otop \hh X_{[A} Z_{B]}^a X_{[C} Z_{D]}^b 
\left(
 \colon \nabla_{\hat n}^{d-5} 
\colon B_{ab}^g + \ltots{d-2}\right)\,.
\end{align*}
So by construction, we have that
\begin{equation}\label{equiv}\bar{q}^* \otop \delta_{\frac{d-6}{2},\frac{d-4}{2}} \hh W \propto \otop \colon \nabla_{\hat n}^{d-5} \colon B^g + \ltots{d-2}  \in \Gamma(\otop\!\odot^2 T^*M[3-d]|_\Sigma)\, .\end{equation}

\smallskip

We next must show that $\FFdn$ is the unique (up to multiplication by a constant and natural) conformal hypersurface invariant of transverse order $d-1$ in $\Gamma(\otop \odot^2 T^* M[3-d]|_{\Sigma})$ determined by the Poincar\'e--Einstein structure. 
The $d=6$ case is handled in Theorem~\ref{blunt}
whose proof is given in Appendix~\ref{wedetestappendices}.
%
By similar considerations as those given there, the leading transverse order term  of $\FFdn$ for $d\geq 8$ is unique.
Now suppose that there exist two distinct conformal hypersurface invariants
$$\otop \colon \nabla_{\hat n}^{d-5} \colon B + Q \:\text{ and } \:\otop\colon \nabla_{\hat n}^{d-5} \colon B +  Q'\,,$$
where $Q\neq Q'$ are tensors with transverse order less than or equal to $d-2$. Necessarily $Q - Q'$ is conformally invariant, has transverse order at most $d-2$, and weight~$3-d$ (an odd integer)
so vanishes by dint of
Theorem~\ref{zero-invt}.   This establishes uniqueness of $\FFdn$.

Finally, we must check that $\FFdn \simeq \operatorname{DN}^{(d)}$.
From Lemma~\ref{DN-formula-even}, this amounts to checking that the terms labeled $\ltots{d-2}$ in Equation~\eqref{equiv} vanish when 
evaluated on a Graham--Lee compactified metric.
This is achieved by recycling the  argument used to prove Lemma~\ref{DN-formula-even}.
Because it has already been established that $\operatorname{DN}^{(d)}$ is the functional gradient of the renormalized volume---see \cite[Section 3.4]{deHaro},~\cite[below Theorem 2.2]{Anderson} and ~\cite[Theorem 1.3]{Albin})---we have completed the proof.
\end{proof}

\begin{remark}
As discussed above the tensor $\operatorname{DN}^{(d)}$ is a functional gradient along  a path of Poincar\'e--Einstein metrics which 
can be labelled by a path of boundary metrics,  so  its  boundary divergence must vanish,
$$
\bar \nabla^a\operatorname{DN}^{(d)}_{ab} =0
\, .
$$ 
In fact both $\bar \nabla^a\operatorname{DN}^{(d)}_{ab}$ and 
 $\bar{\nabla}^a \FFdn_{ab}$ 
 are conformally invariant. Theorem~\ref{peachy} therefore establishes the vanishing of the latter.
 \end{remark}

\subsection{Odd dimensional bulk}

We first need to characterize conformal hypersurface invariants for (asymptotically) Poincar\'e--Einstein structures with conformally flat boundaries. The following is an analog of Theorem~\ref{zero-invt}.
\begin{lemma} \label{lemmita}
Let $\Sigma \hookrightarrow (M^d,\cg)$
be a conformal hypersurface embedding where  $(M^d,\cg,\sigma) \in \APE{k-2}$ with  $2 \leq k \leq d-1$. Moreover, suppose that  the boundary $(\Sigma,\cc_{\Sigma})$ is   conformally flat. Then any natural  conformal hypersurface invariant  $I \in \Gamma({\mathbb T}\Sigma[w])$  with transverse order $1 \leq m \leq k-1$ vanishes.
\end{lemma}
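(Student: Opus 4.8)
The plan is to follow the architecture of the proof of Theorem~\ref{zero-invt}, substituting the conformal flatness of $(\Sigma,\cc_\Sigma)$ for the odd-weight hypothesis used there to eliminate intrinsic invariants, and re-examining the one step---elimination of $T$-curvatures---that relied on the even-dimensional Proposition~\ref{rehash}. The requirement $m\ge 1$ (rather than $m\ge 0$ as in the even case) is forced precisely because, for a conformally flat boundary, transverse order $0$ invariants such as powers of the conformal metric need not vanish.

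First I would reuse the atom decomposition developed in the proof of Theorem~\ref{zero-invt}. None of the Bianchi, Codazzi or Gau\ss\ manipulations there used the parity of $d$, so the same descent argument shows that $I$ is expressible, to all transverse orders, in terms of conformal fundamental forms, $T$-curvatures, and natural intrinsic conformal invariants of $(\Sigma,\cc_\Sigma)$. Because $(M^d,\cg,\sigma)\in\APE{k-2}$, the conformal fundamental forms $\FF{j}$ with $2\le j\le k$ all vanish; and since $I$ has transverse order $m\le k-1$ while each atom $X^{(\ell)}$ has transverse order $\ell+2$ and contributes a form $\FF{\ell+3}$, only forms with $\ell+3\le k$ can occur. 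Hence every fundamental-form contribution vanishes, and $I$ reduces to a combination of $T$-curvatures and intrinsic conformal invariants.

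Next I would dispose of the intrinsic part using conformal flatness: in a flat representative $\bar g\in\cc_\Sigma$ the intrinsic curvature vanishes, so every natural intrinsic conformal invariant built from $\bar R$, $\bar\nabla$ and their contractions is zero, the only survivors being powers of the conformal metric $\bar{\cg}$ and its inverse, all of transverse order $0$. Thus, in such a scale, the preinvariant of $I$ is a sum of $T$-curvature scalars multiplying powers of $\bar{\cg}^{\pm1}$. The crux---and main obstacle---is now the elimination of these $T$-curvatures, since Proposition~\ref{rehash} was established only for $d$ even. The point I would exploit is that only $T$-curvatures $T_j$ of order $j\le m\le k-1\le d-2$ occur, so the construction never reaches the top transverse order $d-1$, where the odd-dimensional obstruction (tied to the renormalized-volume anomaly) lives. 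Retaining the conformal freedom $g\mapsto\Omega^2 g$ with $\Omega|_\Sigma=1$, which preserves flatness of $\bar g$, the transformation law $T_j^{\Omega^2 g}=\Omega^{-j}\big(T_j^{g}+N_j\log\Omega\big)$ lets me solve $N_j\log\Omega=-T_j^{g}$ along $\Sigma$ recursively in the normal jet of $\log\Omega$, provided each $N_j$ attains its leading symbol $\colon\nabla_{\hat n}^j\colon$. The loss of transverse order that can occur at critical weights is exactly what the improved normal operators of~\cite[Theorem~4.16]{GPt} and~\cite[Theorem~3.4]{B} remedy, and the hard part of the whole argument is verifying that, in the admissible range $j\le d-2$, conformal flatness of the boundary removes the obstruction to such operators so that every occurring $T$-curvature can indeed be gauged away. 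This produces a single scale in which $\bar g$ is flat and all relevant $T$-curvatures vanish simultaneously.

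In that scale the preinvariant of $I$ is reduced to a constant-coefficient combination of powers of $\bar{\cg}^{\pm1}$, and hence is of transverse order $0$. Since transverse order is independent of the choice of scale and $I$ has transverse order $m\ge 1$, these constants must vanish; therefore $I=0$ in this scale and, by its conformal invariance, $I\equiv 0$.
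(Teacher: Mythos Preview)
Your approach imports the full machinery of Theorem~\ref{zero-invt}, but the step you yourself flag as ``the hard part''---gauging away the $T$-curvatures in odd dimensions---is a genuine gap. Proposition~\ref{rehash} and the underlying normal-operator/$T$-curvature constructions of~\cite{GPt} are established only for $d$ even; your assertion that conformal flatness of the boundary removes the obstruction in the range $j\le d-2$ is not substantiated, and you have not shown that operators $N_j$ with the required leading symbol exist so that the recursive solve for $\log\Omega$ goes through. Moreover, the atom decomposition you borrow from Theorem~\ref{zero-invt} itself leans on~\cite[Theorem~2.5]{B1}, which the paper explicitly notes applies for $d$ even, so even your first step is not cleanly transplanted to odd $d$.

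The paper's proof bypasses all of this with a much simpler argument: work in the Fefferman--Graham scale associated to a \emph{flat} boundary representative $\bar g=\bar\delta$. There the compactified metric is exactly $g_r={\ext}r^2+\bar\delta+r^{k}Y^{\bar\delta}$ for some smooth $Y^{\bar\delta}$. Since $I$ has transverse order $m\le k-1$, it is insensitive to the $r^{k}Y^{\bar\delta}$ term and is therefore computed from the flat half-space metric ${\ext}r^2+\bar\delta$, for which every curvature vanishes identically; the only surviving natural invariants are built from the metric alone and have transverse order $0$. Hence $I=0$ in this scale, and by conformal invariance, everywhere. Note that this Fefferman--Graham scale is precisely the ``special scale'' you were trying to construct: in it $H$ and all normal jets of curvature vanish to order $k$ automatically, so no odd-dimensional $T$-curvature theory is needed.
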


\begin{proof}
First we choose a scale for which the boundary metric $\bar g= \bar \delta$ is flat. Then, the corresponding Fefferman--Graham expansion of the compactified metric $g_r$ of an  $\APE{k-2}$ structure is 
\begin{equation}\label{flatty}
g_r = {\ext }r^2 + \bar \delta + r^{k} Y
\end{equation}
for some smooth, rank two, symmetric tensor $Y$ which is  trace-free by virtue of the
Poincar\'e--Einstein condition.
The above expansion follows  directly from the treatment of conformally flat and conformally Einstein spaces in~\cite[Chapter 7]{FGbook}; see also Equation~\nn{hypexp}
 below.
 The statement now follows since the only local conformal invariants for a flat metric are those  built from the metric itself,  and that  the order~$r^k$ term cannot contribute to invariants of transverse order strictly  less than $k$. Vanishing in one scale implies the same for all other scales.
\end{proof}


We are now ready to prove Theorems~\ref{spliff} and~\ref{theoremono}.

\begin{proof}[Proof of Theorem~\ref{spliff}]
By Lemma~\ref{lemmita}, we only need consider  invariants of transverse order zero or four. Because the boundary conformal class is flat,
the only intrinsic  (and hence transverse order zero) conformal invariants
 of the correct homogeneity and tensor type 
 are powers of the metric and its inverse (see also Remark~\ref{whyamIhere}).
 No trace-free tensors can be made this way.
 Then an exhaustive search establishes
 that the only Riemannian hypersurface invariant of the desired  homogeneity, tensor type and transverse order four, for the metric of Equation~\eqref{flatty} (with $k=4$), is~$B^\top_{(ab)\circ}$.
 But the conformal transformation of the Bach tensor is
 $$
\Omega^2 B_{ab}^{\Omega^2 g}-B_{ab}^g=
(d-4)\big(2C_{\Upsilon (ab)}
-W_{\Upsilon ab \Upsilon}
\big)\, ,
$$
and the right hand side vanishes when evaluated on  the metric of Equation~\eqref{flatty}, so $B^\top_{(ab)\circ}$ is conformally invariant.
Finally, a calculation of the $B^\top_{(ab)\circ}$ in the metric of Equation~\eqref{flatty} shows that this is proportional to $Y$ along $\Sigma$, which establishes that 
$
\Vo^{\sss\rm DN^\flat}\simeq \operatorname{DN}^{(5)}
$.
%
%
%
%
%
%
%
\end{proof}

\begin{remark}\label{whyamIhere}
 Note that the only non-zero, transverse order zero,  $d=5$, 
natural, conformally invariant, sections of $\odot_\circ^2 T^*\Sigma[-2]$ are~$\bar{B}_{ab}$ and $\bar{W}_{(a|cde} \bar{W}_{b)\circ}{}^{cde}$.
Also, for $\APE{2}$ structures with a generic conformal class $\cc_\Sigma$, 
there are no conformally-invariant trans\-verse-order 2 or 4 tensors of this type. 
A simpler version of the exhaustive analysis of tensor structures and their conformal variations given 
in Appendix~\ref{wedetestappendices}
can be used to establish this.
\end{remark}


\begin{proof}[Proof of Theorem~\ref{theoremono}]
We must verify the 
 well-definedness of two definitions, 
 perform a 
  computation for $\operatorname{DN}^{(d)}$,
  and establish a proportionality statement. To start with, it is known (see~\cite{FGbook}) that a $d\geq 7$ Poincar\'e--Einstein metric~$g^o$ with conformally flat boundary has the Fefferman--Graham expansion
\begin{equation}\label{hypexp}
g_r:=r^2 g^o = {\ext }r^2 + \bar g - \bar P r^2 + \frac14  \bar P^{\odot2}\hh r^4+{\mathcal O}(r^{d-1})\, ,
\end{equation}
where $\bar P$ is the Schouten 
tensor of $\bar g$. Importantly the above expansion holds for {\it any}  $\bar g\in \cc_\Sigma$.
In particular, choosing (locally) $\bar g$ to be the flat metric $\bar\delta$, it follows  that the Weyl tensor of $g_r$ obeys
$$
W_{ab}{}^c{}_d = {\mathcal O}(r^{d-3})\, .
$$
In turn, for any $g\in \cc$ we have that the tractor curvature
$$
F_{ab}{}^A{}_B = W_{ab}{}^{cd}Z_c^A Z_{dB}^{\phantom{A}}
+C_{ab}{}^c (Z^A_c X_B-Z_{cB} X^A)
= 
\begin{pmatrix}
0&0&0\\
\mathcal{O}(r^{d-4})&
\mathcal{O}(r^{d-3})&0\\
0&\mathcal{O}(r^{d-4})&0
\end{pmatrix}\, ,
$$
so that, for any $1\leq k\leq d-4$ and $\sigma=[g;s]$, 
$$
\colon \nabla_{\hat n}^{k} \colon  F_{ab}{}^A{}_B\stackrel {\sss{\mathcal T}_g}=
\begin{pmatrix}
\mathcal{O}(s^{d-k-3})&\mathcal{O}(s^{d-k-2})&0\\[1mm]
\mathcal{O}(s^{d-k-4})&
\mathcal{O}(s^{d-k-3})&\mathcal{O}(s^{d-k-2})\\[1mm]
0&\mathcal{O}(s^{d-k-4})&\mathcal{O}(s^{d-k-3})
\end{pmatrix}\, .
$$
But  any failure of $\colon \nabla_{\hat n}^{d-4} \colon  F_{ab}{}^A{}_B$ to be conformally invariant necessarily involves terms proportional to the above display with $k\leq d-5$. These vanish along~$\Sigma$; to see this, one can re-express the tractor connection~\nn{trac-conn} in a matrix notation and then explicitly examine the failure of the above to transform correctly. 

Thus we now observe that 
$$
\hat n^a\colon \nabla_{\hat n}^{d-4} \colon  F_{ab}{}^C{}_D\stackrel\Sigma=
\begin{pmatrix}
0&0&0\ \\
\hat n^a\colon \nabla_{\hat n}^{d-4} \colon \hh 
C_{a b}{}^c&0&0\ \\
0&- 
\hat n^a\colon \nabla_{\hat n}^{d-4} \colon \hh 
C_{a bd}&0\
\end{pmatrix}\, .
$$
Note that $ C_{ n[bc]}=-\frac12 C_{bc n}=-
n^a \nabla_{[b} P_{c]a}$ so $C_{n[bc]}
-n_{[b}C_{c]nn}
$ 
(which restricts along $\Sigma$ to $C_{\hat n [bc]}^\top$)
vanishes to one higher order than its symmetric counterpart and thus
$$\big(\hat n^a\colon \nabla_{\hat n}^{d-4} \colon \hh 
C_{a [bc]}\big)^{\!\top}\big|_\Sigma=0\, .$$
 Since $ \hat n^a\big(\colon \nabla_{\hat n}^{d-4} \colon  
C_{a b c}\big)\big|_\Sigma$ appears as the projecting part of a tractor, and is symmetric, 
we then have that 
$$
\otop\circ
q^*\big(\big[\hat n^a \colon \nabla_{\hat n}^{d-4} \colon  F_{ab}{}^C{}_D\big]\big|_\Sigma\big)
 = 
 \otop\circ
  \big[
 \hat n^a\colon \nabla_{\hat n}^{d-4} \colon \hh 
C_{a b c}\big]\big|_\Sigma
$$
 is a well-defined element of $\Gamma(\otop \odot^2 T^*M[3-d]|_{\Sigma})$.
 In the case $d=5$, a short computation shows that, in the current setting, the right hand side above equals~$B^\top_{(ab)\circ}$.
 
\smallskip

Unlike in the even dimensional case, no parity argument ensures that the relevant Fefferman--Graham expansion coefficient defining $ \operatorname{DN}^{(d)}$ is covariant with respect to different choices of the boundary metric representative. We shall check that it is by hand.
For that we 
define a reference metric 
$$
\tilde g_r:= {\ext }r^2 + \bar g - \bar P r^2 + \frac14  \bar P^{\odot2}\hh r^4
$$
on a collar neighborhood of~$\Sigma$. Note that the  above is conformal to the hyperbolic metric~\cite{FGbook}.
Then we define a smooth tensor $X^{\bar g}$ in a collar of the boundary (and by dint of smoothness, along~$\Sigma$ itself) by
$$
r^{d-1} X^{\bar g} = g_r-\tilde g_r\, .
$$
Upon choosing a different metric $\bar g'\in \cc_\Sigma$, with respect to the new distance  function $r'=\Omega(r) r$ (suppressing the dependence of $\Omega(r)$ on boundary directions), the new compactified metric $g'$  then obeys (by virtue of Equation\nn{hypexp})
$$
g'_{r'}-\tilde g_{r'}=r'^{d-1}X^{\bar g'}\, .
$$
But, again relying on~\cite{FGbook},
$$
g'_{r'}-\tilde g_{r'}=\Omega(r)^2(g_r-\tilde g_r) + {\mathcal O}(r^d)\, .
$$
Thus we learn that 
$$
r'^{d-1}X^{\bar g'}=\Omega(r)^2 r^{d-1}X^{\bar g}+ {\mathcal O}(r^d)\, .
$$
So using $r'=\Omega(r) r$ and $0<\Omega(0)=\bar \Omega\in C^\infty \Sigma$, we have that  
$$X^{\bar g'}\stackrel\Sigma=\, \bar \Omega^{3-d} X^{\bar g}\, .$$
Clearly $(d-1)!\hh X^{\bar g}|_\Sigma = ({\mathcal L}_{\frac{\partial}{\partial r}})^{d-1} g_r\big|_\Sigma$. To show that $X|_\Sigma$ is trace-free, we first note that the Poincar\'e--Einstein condition implies that (see for example~\cite{Goal})
$$
n^2 + 2\rho \sigma = 1\, ,
$$
where $n:=\ext r$ and  $\rho := -(\nabla^a n_a + r J)/d$,
which in the scale $\sigma = [g_r;r]$ implies
\begin{equation}\label{livingsanstrace}
0=-d \rho=\nabla.n + r J^{g_r}\, .
\end{equation}
With impunity, we may once again compute in the boundary flat metric $\bar \delta$  scale for which
\begin{equation}\label{superflatty}
g_r = {\ext }r^2 + \bar \delta + r^{d-1} X^{\bar \delta}\, .
\end{equation}
Then simple computations show
$$
\nabla.n =\, \tfrac{d-1}2 r^{d-2}\tr_{\bar \delta} X^{\bar \delta} +{\mathcal O}(r^{d-1})
$$
and 
$$
r J^{g_r}=-\tfrac{d-2}{2}r^{d-2}\tr_{\bar \delta} X^{\bar \delta} 
 +{\mathcal O}(r^{d-1})\, .
$$
Hence the right hand side of Equation~\nn{livingsanstrace} equals $\tfrac12\tr_{\bar \delta} X^{\bar\delta}$. So $X^{\bar g}|_\Sigma$ must be trace-free for any choice of boundary scale, so indeed 
$\operatorname{DN}^{(d)} := ({\mathcal L}_{\frac{\partial}{\partial r}})^{d-1} g_r\big|_\Sigma$ defines an element of $\Gamma(\odot_\circ^2 T^*\Sigma[3-d])$.

It only remains to check that $\operatorname{DN}^{(d)}$ and $\FFdnf$ are proportional when evaluated on a Graham--Lee compactified metric. By their respective conformal invariance, we may perform this computation in the metric choice of Equation~\eqref{superflatty}. It is now easy to verify the stated proportionality result.


%

\end{proof}

\begin{remark}
Fefferman and Graham~\cite{FGbook} show that the expansion~\nn{hypexp} also applies when  the boundary class of metrics admits an Einstein metric
when written 
in the Fefferman--Graham coordinate $r$ corresponding to the boundary Einstein metric, and that these expansions are diffeomorphic if  there happen to be distinct Einstein metrics in the boundary conformal class. This implies the existence 
of a Dirichlet-to-Neumann map that is an invariant of the Poincar\'e--Einstein structure for the boundary-Einstein case.
It is likely that there are other such constructions for distinguished boundary conformal classes.


\end{remark}

\section*{Acknowledgements}
We thank Robin Graham for useful discussions.
S.B. acknowledges the support of the Czech Science
Foundation (GACR)  grant GA22-00091S and the Operational Programme Research Development and Education Project No. \!CZ.02.01.01/00/22-010/0007541.
A.R.G. and A.W. 
 acknowledge support from the Royal Society of New Zealand via Marsden Grants  19-UOA-008 and 24-UOA-005. 
J.K.  acknowledges funding received from the Norwegian Financial Mechanism 2014-2021, project registration number UMO-2019/34/H/ST1/00636.
A.W.~was also supported by  Simons Foundation Collaboration Grant for Mathematicians ID 686131, and  thanks the University of Auckland for warm hospitality.

\appendix

\section{Proof of Theorem~\ref{blunt}} \label{wedetestappendices}

The uniqueness part of the \color{black} proof is by exhaustion. The key details are as follows. Given an embedded hypersurface $\Sigma \hookrightarrow (M,g)$, any natural diffeomorphism invariant along $\Sigma$ can be fully described by 
  contractions of the conormal $\hat{n}$, its tangential derivatives, the metric and its inverse, and the bulk curvature~$R^g$ along with its derivatives $\nabla^k R^g$. Note the embedding $\Sigma \hookrightarrow (M_+,g^o)$ for  Poincar\'e--Einstein structures is necessarily umbilic~\cite{LeBrun,Goal}.
 Hence tangential derivatives of~$\hat{n}$ only produce the mean curvature, its hypersurface derivatives and the induced metric. The latter can be expressed in terms of the unit conormal and ambient metric.
So, in a choice of metric representative $g \in \cc$, any natural conformal hypersurface invariant of $\Sigma \hookrightarrow (M,\cc)$ is expressible by such a diffeomorphism invariant.
This is established in~\cite[Proposition 2.2]{B1} (which is based on~\cite[Proposition 2.7]{CAG}). Thus  we construct all possible candidate diffeomorphism invariants
and then show that only a single combination of these produces the desired, transverse order $5$, conformal hypersurface invariant
in  $\Gamma(\odot^2_\circ T^*\Sigma[-3])$.
The homogeneity under constant metric rescalings of the above set of ``letters'' is tabulated below:
$$
 \begin{array}{||c| c||} 
 \hline
 \mbox{Ingredient}^{\phantom b^{\phantom c}}\!\!\! & \mbox{Weight}\\ [0.1ex] 
 \hline\hline
 g^{ab} & -2   \\ 
 \hat{n}_a & 1 \\
H^g & -1  \\
\bar{\nabla}_a & 0  \\
\nabla_a & 0  \\
R_{abcd} & 2  \\ [1ex] 
 \hline
\end{array}
$$

\noindent
We note that when building a rank-$2$ 
trace-free tensor with conformal weight $-3$ from the above letters, only the inverse metric can reduce tensor rank. Hence, to find all words with the aforementioned properties,  we are faced with 
a non-negative, integer-valued linear algebra problem 
whose solutions are listed below:
$$\bar{\nabla}^2 H^3, \;
 g^{-1} H^3 R, \;
 (g^{-1})^2 \hat{n}^2 H^3 R, \;
 (g^{-1})^2 \hat n (\bar{\nabla} H^2) R, \;
 (g^{-1})^2 \hat n H^2 \nabla R, \;
 (g^{-1})^2 (\bar \nabla^2 H) R, \;
 g^{-1} \bar{\nabla}^4 H, 
 $$
 $$
 (g^{-1})^3 \hat n^2 (\bar \nabla^2 H) R, \;
 (g^{-1})^3 H R^2, \;
 (g^{-1})^4 \hat{n}^2 H R^2, \;
 (g^{-1})^2 (\bar \nabla H) \nabla R, \;
 (g^{-1})^3 \hat{n}^2 (\bar \nabla H) \nabla R, \;
$$
$$
 (g^{-1})^2 H \nabla^2 R,\;
 (g^{-1})^3 \hat n H \nabla^2 R, \;
 (g^{-1})^4 \hat n \nabla R^2, 
  (g^{-1})^3 \hat n \nabla^3 R\,.
  $$
Note that the condition $\hat n_a \hat n^a=1$ gives an upper bound on the number of appearances of $\hat n$
conditioned on the other letters appearing. Also, since we are searching for rank-two, trace-free covariant tensors, all inverse metrics $g^{-1}$ must be completely contracted.

The number of independent rank two tensors built 
from linear combinations of
the above list of words reduces significantly
 because  $(M^6_+,g^o)$ is Poincar\'e--Einstein, 
which allows these tensors to be re-expressed in terms of $\bar{g}$, $\bar{g}^{-1}$, $\bar{\nabla}$, $\bar{R}$, $J$, $\nabla_{\hat n} J$, $H$, and $\otop \nabla_{\hat n} B$. The list of all  weight $-3$, rank-$2$ combinations of these letters is given by
$$
\otop \nabla_{\hat n} B, \;
\bar{\nabla}^2 \nabla_{\hat n} J, \;
 \bar{g}^{-1} \bar{R} \nabla_{\hat n} J, \;
 \bar{\nabla}^2 H J, \;
 \bar{g}^{-1} H J \bar{R},
 $$
 $$
 \bar{g}^{-1} \bar{\nabla}^4 H, \;
 (\bar{g}^{-1})^2 \bar{\nabla} H \bar{R}, \;
 (\bar{g}^{-1})^3 H \bar{R}^2, \;
 \bar{\nabla}^2 H^3, \;
 \bar{g}^{-1} H^3 \bar{R}\, .
$$
These yield $24$ linearly independent symmetrized and trace-free tensors  of the correct weight and transverse orders:
$$
{\odot^2_\circ} 
\Big\{
\top\nabla_{\hat n} B_{ab}, \;
 \bar{P}_{cd} \nabla_{\hat n} J, \;
 \bar{\nabla}_c \bar{\nabla}_d \nabla_{\hat n} J, \;
 (\bar{\nabla}_c H) \bar{\nabla}_d J, \;
 H J \bar{P}_{cd}, \;
 H \bar{\nabla}_c \bar{\nabla}_d J, \;
 \bar{W}^{acdb} \bar{\nabla}_a \bar{\nabla}_b H, \;
 $$
 $$
 \qquad\:
 \bar{C}_{acd} \bar{\nabla}^a H,
  H \bar{B}_{cd}, \;
 H \bar{W}_c{}^{eab} \bar{W}_{dabe}, \;
 H \bar{W}_c{}^{abe} \bar{W}_{dabe}, \;
 H \bar{W}_c{}^{bae} \bar{W}_{dabe}, \;
 H \bar{P}_{ca} \bar{P}^a_d, \;
   \phantom{\Big\}}
  $$
  $$
  \qquad\:\:
  H \bar{P}^{ab} \bar{W}_{acdb}, \;
 H \bar{J} \bar{P}_{cd},\;
H^3 \bar{P}_{cd}, 
  \bar{J} \bar{\nabla}_c \bar{\nabla}_d H, \;
 (\bar{\nabla} H_c) \bar{\nabla}_d \bar{J}, \;
 H \bar{\nabla}_c \bar{\nabla}_d \bar{J}, \;
\bar{\nabla}_c \bar{\nabla}_d \bar{\Delta} H, \;
     \phantom{\Big\}}
 $$
 $$
 \bar{P}^a_c \bar{\nabla}_a \bar{\nabla}_d H, \;
  H^2 \bar{\nabla}_c \bar{\nabla}_d H, \;
 H (\bar{\nabla}_c H) (\bar{\nabla}_d H), \;
 \bar{P}_{cd} \bar{\Delta} H\Big\}\,.
$$
Requiring that the conformal variation of
an arbitrary linear  combination of the above basis elements  vanishes, we find a  unique combination
for which the  coefficient of the (only) transverse order five tensor $\otop \nabla_{\hat n} B$ is unity. In a choice of metric $g \in \cc$ this is
$$
\otop \nabla_{\hat n} B_{ab} - 4 \bar{C}_{c(ab)} \bar \nabla^c H + 4 H \bar{B}_{ab}\,.
$$
The Poincar\'e--Einstein condition in six dimensions gives  $2 \bar{B} = \otop B|_{\Sigma}$~\cite{BGW}, and in turn the expression quoted in the theorem. The conformal  property ensures that the tensor defines an invariant of the Poincar\'e--Einstein structure.

Finally, when evaluated on the Graham--Lee compactified metric  $g_r$ associated with a boundary choice of metric representative $\bar{g} \in \cc_{\Sigma}$, we must show that ${\VIo^{\sss\rm DN}}$ agrees with $\operatorname{DN}^{(6)}$. It has already been established in Lemma~\ref{DN-formula-even} that $\operatorname{DN}^{(6)} \propto \otop \nabla_{\hat{n}} B$. Because  $g_r$ is a  Graham-Lee compactified metric, it follows by virtue of Theorem~\ref{skibido} that the first, third, and fifth $T$-curvatures then vanish. In particular, the first $T$-curvature is the mean curvature, so it trivially follows that in this metric representative,
$${\VIo^{\sss\rm DN}} = \otop \nabla_{\hat n} B\,.$$
The theorem follows.
\color{black}
\hfill$\square$

\end{document}